\definecolor{olive}{HTML}{bcbd22}
\definecolor{blue}{HTML}{1f77b4}
\definecolor{green}{HTML}{2ca02c}
\definecolor{red}{HTML}{d62728}
\definecolor{purple}{HTML}{9467bd}
\DeclareMathOperator*{\prox}{prox}
\DeclareMathOperator*{\argmin}{arg\,min}
\newcommand{\norm}[1]{\left\lVert#1\right\rVert}
\newcommand{\uv}{\mathbf{u}}
\newcommand{\x}{\mathbf{x}}
\newcommand{\dx}{\mathrm{d}}
\newcommand{\Vh}{\mathcal{V}_h}
\newcommand{\N}{\mathbb{N}}
\newcommand{\R}{\mathbb{R}}
\newcommand{\T}{T}
\newcommand{\M}{\mathbf{M}}
\newcommand{\D}{\mathbf{D}}
\newcommand{\A}{\mathbf{A}}
\newcommand{\K}{\mathbf{K}}
\newcommand{\p}{\mathbf{p}}
\newcommand{\Qh}{\mathcal{Q}_h}
\newcommand{\Qho}{\mathcal{Q}_h^1}
\newcommand{\Qht}{\mathcal{Q}_h^2}
\newcommand{\Qo}{\mathcal{Q}^1}
\newcommand{\Qt}{\mathcal{Q}^2}
\newcommand{\Fa}{F_h^{\alpha}}
\newcommand{\Fab}{\mathbf{F}_h^{\alpha}}
\newcommand{\Qcal}{\mathcal{Q}}
\newcommand{\Vcal}{\mathcal{V}}
\newcommand{\Xcal}{\mathcal{X}}
\newcommand{\Pf}{\mathcal{P}}
\newcommand{\TVSa}{\mathbf{TV\; S}^\alpha}
\newcommand{\TVSTa}{\mathbf{TV\; ST}^\alpha}
\newcommand{\TVSo}{\mathbf{TV\;S}^1}
\newcommand{\TVSTo}{\mathbf{TV\;ST}^1}
\newcommand{\TVSt}{\mathbf{TV\;S}^2}
\newcommand{\TVSTt}{\mathbf{TV\;ST}^2}
\newcommand{\db}{\mathrm{dB}}
\newcommand{\GT}{\mathbf{GT}}
\newcommand{\Tz}{\mathbf{T0}}
\newcommand{\To}{\mathbf{T1\;S}}
\newcommand{\TTo}{\mathbf{T1\;S+T}}
\newcommand{\TV}{\mathrm{TV}}
\newcommand{\tens}[1]{\otimes_{#1}}%
\newcommand{\Ft}{F_h^{2}}
\newcommand{\Fob}{\mathbf{F}_h^{1}}
\newcommand{\Fokb}{\mathbf{F}_{h,k}^{1}}
\newcommand{\Fodb}{\mathbf{F}_{h,d+1}^{1}}
\newcommand{\Ftb}{\mathbf{F}_h^{2}}
\newcommand{\soutr}{\bgroup\markoverwith{\textcolor{red}{\rule[0.5ex]{2pt}{1.2pt}}}\ULon}
\newtheorem{theorem}{Theorem}
\title{Finite element-based space-time total variation-type regularization of the inverse problem in electrocardiographic imaging}
\author{Manuel Haas\footnotemark[2]
\and Thomas Grandits\footnotemark[3]
\and Thomas Pinetz\footnotemark[2]
\and Thomas Beiert\footnotemark[4]
\and Simone Pezzuto\footnotemark[5]
\and Alexander Effland\footnotemark[2]}
\begin{document}
	
\maketitle

\renewcommand{\thefootnote}{\fnsymbol{footnote}}

\footnotetext[1]{This work was supported by the Deutsche Forschungsgemeinschaft (DFG, German
Research Foundation), EXC2151-390873048 and EXC-2047/1-390685813. SP and TG are also supported by the SNSF project ``CardioTwin'' (no.~214817). SP acknowledges the support of the CSCS-Swiss National Supercomputing Centre project no.~s1074 and the PRIN-PNRR project no.~P2022N5ZNP. SP is member of INdAM-GNCS.}
\footnotetext[2]{Institute of Applied Mathematics, University of Bonn, Germany}
\footnotetext[3]{ Euler Institute, Universit\`{a} della svizzera italiana, Switzerland; Department of Mathematics and Scientific Computing, University of Graz, Austria}
\footnotetext[4]{ Heart Center Bonn, Department of Internal Medicine II, University Hospital Bonn, Germany}
\footnotetext[5]{ Department of Mathematics, University of Trento, Italy}

\renewcommand{\thefootnote}{\arabic{footnote}}

\begin{abstract}
Reconstructing cardiac electrical activity from body surface electric potential measurements results in the severely ill-posed inverse problem in electrocardiography.
Many different regularization approaches have been proposed to improve numerical results and provide unique results.
This work presents a novel approach for reconstructing the epicardial potential from body surface potential maps based on a space-time total variation-type regularization using finite elements, 
where a first-order primal-dual algorithm solves the underlying convex optimization problem.
In several numerical experiments, the superior performance of this method and the benefit of space-time regularization for the reconstruction of epicardial potential on two-dimensional torso data and a three-dimensional rabbit heart compared to state-of-the-art methods are demonstrated.
\end{abstract}



\section{Introduction}
Deducing the electrical activity of the heart non-invasively has enormous potential to shorten clinical intervention times and improve patient outcomes. 
For this purpose, specialized hardware was developed to capture the electrical activity on the torso and reconstruct the epicardial potential~\cite{Cl18, Ra04} in the form of body potential surface maps (BSPM).
Reconstructing the epicardial potential from BSPMs by exploiting further information about the smoothness and regularity of the potential functions is widely known as the inverse problem in electrocardiographic imaging (ECGI).
The resulting epicardial potential enables clinicians to understand and identify the origins of arrhythmias.
Previous studies \cite{No22, Qu17} provide comprehensive overviews of inverse problems in cardiovascular modeling, including electrocardiographic imaging and blood flow modeling.
The mathematical formulation of the inverse problem in ECGI in terms of a linear forward operator is extensively studied in the literature (see e.g.~\cite{Fr14, Su06, Wa23}).
Computing the discretized forward operator numerically results in different choices like the boundary element method (BEM)~\cite{Ma10, ST08}, the method of fundamental solutions (MFS)~\cite{Wa06}, and the finite element methods (FEM)~\cite{Br08, Se05, WaKi11, Wa10}. 
In this work, we focus on the FEM, as it demonstrated improved performance for the solution of the inverse problem~\cite{Bo23}.

The potential-based ECGI problem results in an ill-posed Cauchy problem in the sense of Hadamard~\cite{Be07, Ha03, Ho09}.
In particular, the inverse solution is extremely sensitive to small perturbations of the measurements.
To overcome these issues, regularization of the epicardial potential is required to solve the inverse problem robustly.
Different approaches were proposed including the well-known Tikhonov regularization~\cite{Ti77} of different orders, model-based approaches~\cite{GeodesicBP2024,PezzutoBayes2022}, and machine learning-driven methods~\cite{Te22}.
Despite the fact that machine learning and deep learning approaches lead to promising results in simulated models, the training imposes challenges.
Due to the absence of available human data sets containing BSPMs, thoracic CT scans, and corresponding invasive measurements of epicardial potentials as ground truth reference, well-established methods like Tikhonov regularization are still among the most commonly applied types of regularization.
Tikhonov regularization imposes a penalty on the electrical activity of the heart or its derivatives with the quadratic $L^2$-norm causing considerable smoothing of the epicardial potential.
In some approaches (see e.g.~\cite{Ch03, Ka18}) different regularization techniques and hyperparameter choices for optimization are systematically compared.
Optimizing inverse problems with non-quadratic $L^1$ or $L^{2,1}$ regularization with $\norm{f}_{L^{2,1}(\Omega)}\coloneqq \int_{\Omega}\norm{f}_2\dx\x$ for $\Omega\subset\R^d$ open, $f\in (L^2(\Omega))^d$, and $d\in\mathbb{N}$ is widely known to successfully reconstruct solutions in imaging and computing less smoothed potential functions (see e.g.~\cite{Ba07, Ch10, Es04, Gh09, InvScar, Wa11}).

Further studies using temporal as well as spatial priors for spatiotemporal inverse problems, including Bayesian MAP-based regularization, show promising improvements (see e.g.~\cite{Um09, Gr03, Me04, On09}).

In this paper, we present a novel space-time total variation-type regularization in a finite element setting on unstructured grids that allows for sharp transitions of the electrical activity on the heart, which is crucial for accurately modeling rapid changes in cardiac electrical signals.
Unlike Tikhonov regularization, which primarily enforces smoothness and often struggles to represent sharp transitions effectively, total variation captures discontinuities and sharp gradients inherent in cardiac electrophysiology, making it a more suitable choice for this problem. 
The novelty of our approach lies in integrating space-time total variation regularization with finite element methods, a concept that has been scarcely explored in the literature. To identify the most suitable method, we introduce various finite element discretizations and different formulations of total variation for evaluation.
Here, the quality of the solution depends on two hyperparameters representing anisotropic smoothing in spatial and temporal directions. 
The proposed total variation-type regularizer is non-differentiable and therefore not amenable to classical derivative-based optimization algorithms. 
Classically, this non-differentiability can be overcome by a smooth approximation~\cite{Bo10}, which, however, neglects sharp interfaces promoted by total variation.
Thus, in this paper, we advocate a first-order primal-dual method~\cite{Cha11} adapted for finite elements, thereby exploiting the convexity of the energy function, resulting in fast and accurate reconstruction.
\Cref{fig:illustration} illustrates the basic concept of the inverse problem with total variation regularization.

For evaluation, we simulate the ground truth epicardial potential utilizing the pseudo bidomain equation~\cite{Bi11} on a simple two-dimensional torso model as well as on a three-dimensional rabbit heart~\cite{Mo22}.
Visual and quantitative comparisons are provided to demonstrate the superiority of our method over different state-of-the-art Tikhonov regularizations.
As the quality of the reconstruction is highly dependent on the number of measured potentials on the torso (see e.g.~\cite{Jo94}), we introduce a study of evaluation errors for an increasing number of torso electrodes.

\begin{figure}[htp] \centering{
\begin{tikzpicture}
    \node[inner sep=0pt] (A) at (0,0){\includegraphics[clip, trim=0.7cm 0.3cm 1cm 0.1cm,width=1\textwidth]{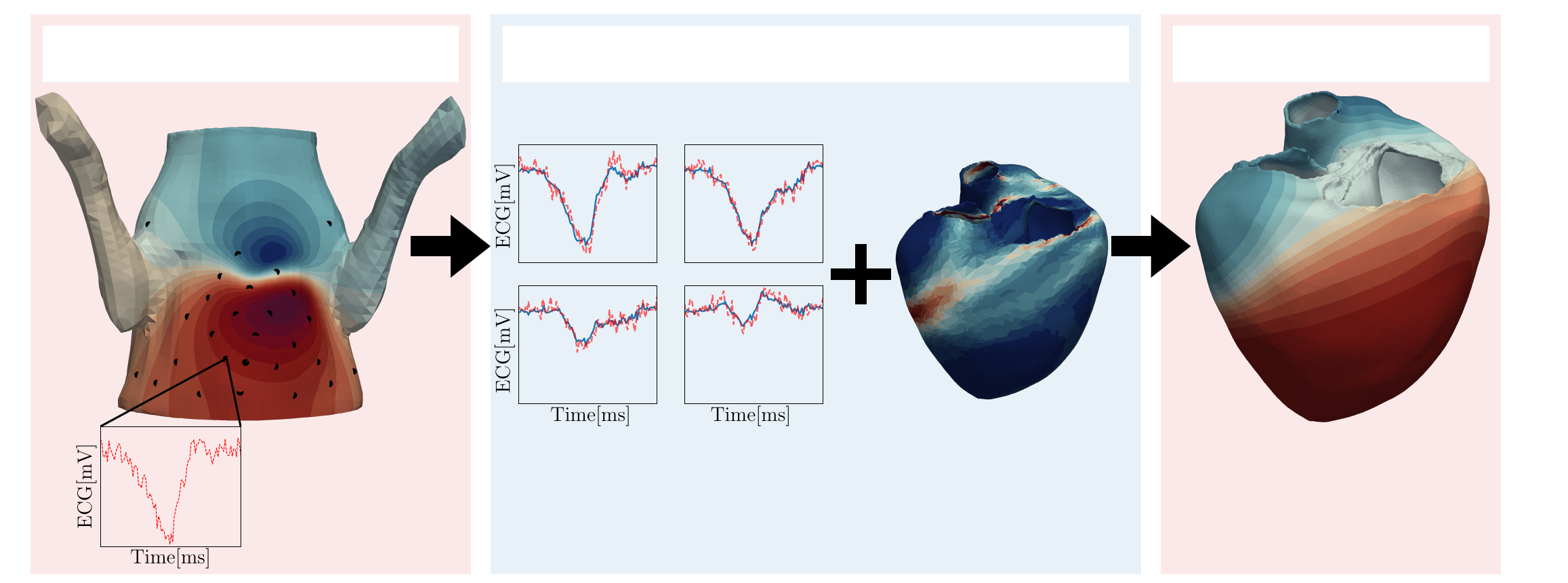}};
    \node[inner sep=0pt] at (-1.55,1.5) 
    {Minimize:};

    \node[inner sep=0pt] at (-4.8,2) 
    {{\large \textbf{Input}}};
    \node[inner sep=0pt] at (0.3,2) 
    {{\large \textbf{Optimization}}};
    \node[inner sep=0pt] at (4.8,2) 
    {{\large \textbf{Output}}};

    \node[] at (-3.7,-1.9) 
    {$z(\x,\cdot)$};
    \node[] at (-0.9,-1.9) 
    { $(A[u]-z)(\x,\cdot)$};
    \node[] at (1.9,-1.9) 
    { $\norm{\nabla_{(\x,t)}u}_{2}(\cdot,t)$};
    \node[] at (4.8,-1.9) 
    {$u^*(\cdot,t)$};

\end{tikzpicture}}
\caption{Basic illustration of total variation-based regularization for the inverse problem in ECGI on the rabbit database. Body surface measurements $z$ are compared to the optimization variable $u$ on the electrode domain $\Sigma$ and are minimized together with prior information in the form of the space-time gradient vector $\nabla_{(\x,t)}u$, resulting in the reconstruction $u^*$.}
\label{fig:illustration}
\end{figure}
Our work introduces the finite-dimensional inverse problem in ECGI in a continuous setting and applies finite element discretization to the non-smooth energy minimization problem. 
After reformulating to a saddle point problem, a primal-dual method is used to find an optimal solution.
The paper is structured as follows:
In \Cref{sec:space-timeECGI},  we present the forward and inverse problems in ECGI and prove the existence and uniqueness of a solution with the total variation-type regularizer.
Then, in \Cref{sec:discretization}, the discrete function spaces, the total variation operator, and the forward operator solving the forward problem are introduced along with their respective finite element discretization schemes.
In \Cref{sec:Primal-DualOptimization}, the primal-dual optimization algorithm tailored to the finite element setting is derived.
Finally, the numerical results are evaluated in \Cref{sec:numerical_results} for different noise variations in the two- and three-dimensional settings, where we additionally focus on the performance in an ablation study for an increasing number of thoracic potential measurements.

\section*{Notation}
For an open set $\mathcal{X}\in\R^N$ for $N\in\N$, we denote the Sobolev space by $H^s(\Omega)\coloneqq W^{s,2}(\Omega)$ with $s\in\R,s\geq 0$.
We define $\llbracket N\rrbracket\coloneqq\{1,\ldots, N\}$ and $\llbracket N\rrbracket_0\coloneqq\{0,\ldots, N\}$ for any $N\in\N$.
Real-valued vectors of dimension $N\cdot M$ are defined by bold lowercase letters $\mathbf{x}\in\R^{NM}$ and real-valued matrices by bold capital letters $\mathbf{X}\in\R^{N M\times N M}$ for $N,M\in\N$.
The maximum eigenvalue of a matrix $\mathbf{X}$ is denoted as $\lambda_{\mathrm{max}}(\mathbf{X})$.
Furthermore, $\Xcal^h$ refers to a tesselation for any set $\Xcal\subset \R^N$.
The continuous identity function is defined as $I_h$ and the $N$-dimensional identity matrix as $\mathbf{I}_N$.
We denote by $\mathrm{diag}(a_i)_{i=1}^N$ the diagonal matrix in $\R^{N\times N}$ with diagonal elements $a_1,\ldots,a_N\in\R$ and $0$ as off-diagonal entries.
The tensor product is defined as $\otimes$; the Kronecker product of two matrices is denoted by $\tens{K}$.
We denote the Dirac delta function of a set $\Xcal$ by $\delta_{\Xcal}$.

\section{Forward and Inverse Problem in Electrocardiographic Imaging}\label{sec:space-timeECGI}
In the following section, we introduce the forward problem in electrocardiographic imaging and deduce the inverse problem.
Furthermore, we propose a novel space-time total variation-type regularizer and prove the existence and uniqueness of weak solutions.

\subsection{Forward Problem}
\label{subsec:forward_problem}
The well-posed forward problem in ECGI aims to compute the electrical activity on the torso domain from the given epicardial potential.
Let us denote the \emph{torso domain} by $\Omega\subset\R^d$ for $d\in\{2,3\}$ with \emph{body surface} $\Gamma = \partial \Omega$ and associated \emph{unit outer normal vector} $\mathbf{n}$, and the \emph{heart} by $\Omega_H\subset\Omega$ with \emph{epicardium} $\Gamma_H = \partial \Omega_H$.
Furthermore, we denote by $\Omega_0 = \Omega\setminus\bar{\Omega}_H$ the torso domain excluding the cardiac region.
The boundary $\partial \Omega_0 = \Gamma\cup\Gamma_H$ is assumed to be Lipschitz and bounded, and $\Gamma \cap \Gamma_H = \emptyset$.
In our work, we exclusively assume static domains that are invariant over time.
We also denote by $\sigma\in L^\infty(\Omega, \R^+)$ the \emph{torso's conductivity tensor (bulk conductivity)}, where we assume the space-dependent matrix $\sigma$ to be symmetric satisfying the ellipticity condition~$\zeta^{-1}|\mathbf{y}|^2\leq \sigma(\x)\mathbf{y}\cdot \mathbf{y}\leq \zeta|\mathbf{y}|^2$ for some $\zeta > 0$ and all $\mathbf{y} \in \R^d$.
Then, the \emph{volumetric torso potential} $v:\Omega_0\times\T\to\R$ in some finite time interval $\T=(0,\bar{t})$ for $\bar{t}>0$ with given \emph{epicardial potential} $u:\Gamma_H\times\T\to\R$ solves the following (forward) problem 
\begin{equation}
\label{eq:E_forward}
\begin{cases}
\begin{aligned}
-\operatorname{div}\left(\sigma(\x)\nabla_{\x}v(\x,t)\right) &= 0,  &&\left(\mathbf{x},t\right)\in \Omega_0\times\T, \\
\sigma(\x)\nabla_{\x}v(\x,t)\cdot\mathbf{n} &= 0,  &&\left(\mathbf{x},t\right)\in \Gamma\times\T,  \\
v(\x,t) &= u(\x,t),  &&\left(\mathbf{x},t\right)\in \Gamma_H\times\T.
\end{aligned}
\end{cases}
\end{equation}
The subsequent theorem ensures the existence, uniqueness, and regularity of solutions of this elliptic PDE system.
\begin{theorem}
There exists a unique solution $v\in H^1(\T,H^1(\Omega_0))$ of \eqref{eq:E_forward} for $u\in H^1(\T,H^{1/2}(\Gamma_H))$ in a weak sense.
\end{theorem}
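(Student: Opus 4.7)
The plan is to treat \eqref{eq:E_forward} as a parameterized family of static elliptic problems with mixed Dirichlet--Neumann boundary data, solve each with Lax--Milgram, and then lift the resulting $t$-by-$t$ solution operator to a continuous linear map on Bochner spaces to obtain the claimed time regularity.

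First, I would fix $t\in\T$ and pass to the weak formulation. Choose a continuous right inverse $E:H^{1/2}(\Gamma_H)\to H^1(\Omega_0)$ of the Dirichlet trace on $\Gamma_H$ (such an extension exists because $\Gamma_H$ is a Lipschitz piece of $\partial\Omega_0$ separated from $\Gamma$), and write $v(\cdot,t)=w(\cdot,t)+Eu(\cdot,t)$. Then $w(\cdot,t)$ should lie in the closed subspace
\begin{equation*}
V\coloneqq\{\phi\in H^1(\Omega_0):\phi|_{\Gamma_H}=0\}
\end{equation*}
and satisfy, for all $\phi\in V$,
\begin{equation*}
a(w(\cdot,t),\phi)\coloneqq\int_{\Omega_0}\sigma(\x)\nabla w(\x,t)\cdot\nabla\phi(\x)\,\dx=-\int_{\Omega_0}\sigma(\x)\nabla(Eu)(\x,t)\cdot\nabla\phi(\x)\,\dx.
\end{equation*}
The homogeneous Neumann condition on $\Gamma$ is natural and encoded by letting test functions $\phi$ range over all of $V$.

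Next, I would apply Lax--Milgram on $V$. Continuity of $a$ follows from the $L^\infty$ bound on $\sigma$, and coercivity from the ellipticity of $\sigma$ combined with a Poincar\'e inequality on $V$; the latter holds because $\Gamma_H$ has positive $(d{-}1)$-measure in $\partial\Omega_0$ (Lipschitz, bounded, disjoint from $\Gamma$). This produces, for each $t$, a unique $w(\cdot,t)\in V$ with $\|w(\cdot,t)\|_{H^1(\Omega_0)}\lesssim\|Eu(\cdot,t)\|_{H^1(\Omega_0)}\lesssim\|u(\cdot,t)\|_{H^{1/2}(\Gamma_H)}$. Setting $v(\cdot,t)\coloneqq w(\cdot,t)+Eu(\cdot,t)$ gives the unique weak solution of \eqref{eq:E_forward} at time $t$, and the resulting solution map $S:H^{1/2}(\Gamma_H)\to H^1(\Omega_0)$ is linear and bounded.

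Finally, I would promote this pointwise-in-$t$ construction to the Bochner setting. Because $S$ is a bounded linear operator between Hilbert spaces, composing it with $u\in H^1(\T,H^{1/2}(\Gamma_H))$ yields $v=S\circ u\in H^1(\T,H^1(\Omega_0))$ with $\partial_t v=S(\partial_t u)$, and uniqueness transfers from the pointwise statement by linearity of the PDE (any two solutions would yield a $v_1-v_2\in H^1(\T,V)$ satisfying the homogeneous problem almost everywhere in $t$, hence vanishing). The main obstacle I anticipate is not any single step but verifying the geometric hypotheses needed for both the trace extension and the Poincar\'e inequality on $V$, in particular ensuring that the separation $\Gamma\cap\Gamma_H=\emptyset$ and the Lipschitz regularity of $\partial\Omega_0$ suffice so that the constants in $E$ and in Poincar\'e are independent of $t$; once this is in place, the Bochner-space argument is essentially automatic.
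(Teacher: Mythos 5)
Your proposal is correct and follows exactly the route the paper has in mind: the paper's proof is a one-line appeal to ``standard arguments from elliptic PDE theory, regularity theory, and the trace theorem'' (citing Evans and Ne\v{c}as), together with the remark that the space-time statement follows by pointwise evaluation in time, and your writeup---trace extension $E$, Lax--Milgram with Poincar\'e on $V$, then lifting the bounded, time-independent solution operator $S$ to the Bochner space $H^1(\T,H^{1/2}(\Gamma_H))$---is precisely that argument spelled out. Your anticipated obstacle is in fact harmless here: since $\Gamma\cap\Gamma_H=\emptyset$ the Dirichlet and Neumann boundary portions are disjoint closed components of $\partial\Omega_0$ (no mixed-boundary interface), and since the domain is static, $S$ is a single fixed operator, so all constants are automatically independent of $t$.
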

\begin{proof}
The proof relies on standard arguments from elliptic PDE theory, regularity theory, and the trace theorem; we omit further details and refer the reader to~\cite{Ev10, Ne12}.
\end{proof}
We remark that, under suitable regularity assumptions on the temporal part of the function,
the existence and uniqueness of a solution to the space-time problem is ensured by pointwise evaluation in time for the boundary function $u$.
Let $v_u$ be the solution associated with $u$.
Then, we can define the continuous \emph{forward operator}
\[
A:H^1(\T, H^{1/2}(\Gamma_H))\to H^1(\T, H^{1/2}(\Gamma)), \quad u\mapsto v_u|_\Gamma, 
\]
which ``transfers'' the epicardial potential to the torso.
Note that the inverse of the forward operator is, in general, unbounded.

\subsection{Inverse Problem}
In this subsection, we introduce the severely ill-posed inverse problem of reconstructing the epicardial potential using measurements $z$ on the body surface.
In detail, we assume the \emph{body surface potential} $z\in L^2(\Sigma\times\T)$ at the electrodes represented by $\Sigma \subset \Gamma$ for $N_\Sigma\in\N$ electrodes to be known, and no further information about the potential on the heart is available.
Henceforth, we assume that the electrodes do not collapse to a single point, reflected by a positive $(d-1)$-dimensional surface measure of the electrodes.
The setting is illustrated in \Cref{fig:2D_model}.
\begin{figure}[htp] \centering{
\includegraphics[clip, trim=0cm 1.6cm 0cm 0.5cm,width=0.5\textwidth]{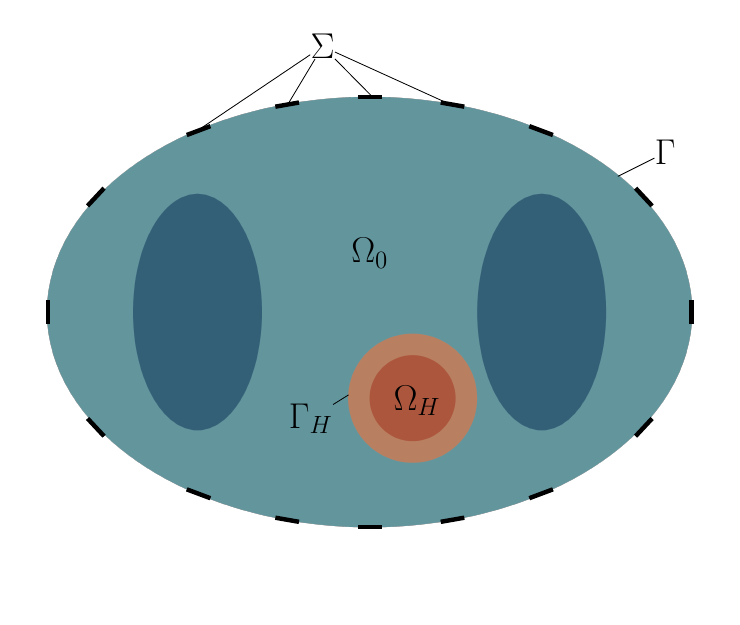}}
\caption{Two-dimensional torso-heart model with heart domain $\Omega_H$, torso domain $\Omega_0$ including lungs, epicardium $\Gamma_H$, torso boundary $\Gamma$ and $16$ body surface electrodes $\Sigma$.}
\label{fig:2D_model}
\end{figure}
For the inverse problem, we assume Dirichlet boundary conditions on the torso domain instead of Dirichlet boundary conditions on the heart as in \eqref{eq:E_forward}.
Consequently, we aim to minimize the cost function
\begin{equation}\label{eq:G_cont}
\argmin_{u\in H^{1}\left(\Gamma_H\times\T\right)} 
\left\{
G(u)\coloneqq\frac{1}{2 |\Sigma | N_\Sigma}\int_{\Sigma\times\T}\left(A[u](\x,t)-z(\x, t)\right)^2 \dx (\x,t)
\right\},
\end{equation}
where $|\Sigma|$ is the constant area of the electrodes $\Sigma$.
Driven by the ill-posedness, we follow the common paradigm in inverse problem theory by adding a regularization term.
Various methods for finding the best solution to this problem have been proposed such as zero- or first-order Tikhonov regularization methods promoting sparsity or, more recently, machine learning-based techniques.
It is well-known that $ L^2$-norm-based regularizers exhibit smoothing effects while preventing sharp transitions of solutions.
In a seminal paper, Rudin, Osher, and Fatemi~\cite{Ru92} advocated the total variation regularizer for applications in imaging to overcome the aforementioned issues.
Promising studies about total variation-type regularization (in space only) for the inverse problem in ECGI have already been conducted in a BEM setting (see e.g.~\cite{Gh09, Sh10, Ji14}).
Here, the normal derivative of the epicardial potential is penalized in the $L^1$-norm.
To the best of our knowledge, we are the first to introduce a finite-element total variation-type approach in both space and time for the inverse problem in ECGI.
We here advocate the spatiotemporal total variation-type regularizer with $L^1$- or $L^{2,1}$-norm given by
\begin{equation}\label{eq:F_cont}
    F^{\alpha}(u) = \int_{\Gamma_H\times\T}\TV_\epsilon^\alpha\left(Ku(\x,t)\right)\dx (\x,t) 
\end{equation}
$\alpha\in\{1,2\}$ for each of the norms respectively, and the weighted total variation operator $K \coloneqq \Lambda\nabla_{(\x,t)}$. 
The classical total variation is extended by an anisotropy term represented by a matrix $\Lambda = \mathrm{diag}(\lambda_{\gamma},\ldots,\lambda_{\gamma},\lambda_t)\in\R^{(d+1)\times (d+1)}_+$, where $\lambda_{\gamma}$ and $\lambda_t$ represent the spatial and temporal penalization parameters, respectively.
We consider the subsequent half-quadratic penalization functions for $\x\in\R^{d+1}$ and fixed $\epsilon>0$
\begin{align*}
\TV_\epsilon^1(\x)&=\sum_{i=1}^{d+1}\widetilde{\TV}_\epsilon^1(x_i), \quad \text{with} \quad
\widetilde{\TV}_\epsilon^1(x_i) = \begin{cases}
|x_i|, &|x_i|\leq\frac{1}{\epsilon},\\
\epsilon x_i^2,  &\text{else},
\end{cases}
\quad \text{and} \\
\TV_\epsilon^2(\x)&=
\begin{cases}
\norm{\x}_2, &\norm{\x}_2\leq\frac{1}{\epsilon},\\
\epsilon\norm{\x}_2^2,  &\text{else}.
\end{cases}
\end{align*}
By considering $\TV_\epsilon^\alpha$ we can analyze the function in classical Sobolev spaces rather than in the space of functions of bounded variation, which facilitates the analytical treatment of the problem.
Then, we can define the \emph{regularized energy function} for $u\in H^{1}(\Gamma_H\times\T)\subset H^1(\T,H^{1/2}(\Gamma_H))$ as $J^{\alpha}(u)=F^{\alpha}(u)+G(u)$.
\begin{theorem} 
$J^{\alpha}+T$ for a Tikhonov regularizer $T(u)=\frac{\eta}{2}\int_{\Gamma_H\times\T}u(x,t)^2\dx (\x,t)$ with $\eta>0$ admits a unique minimizer in $H^{1}(\Gamma_H\times\T)$ for $\alpha\in\{1,2\}$.
\end{theorem}
\begin{proof}
We employ the direct method in the calculus of variations. 
Note that the additional Tikhonov regularizer~$T$ is solely required to ensure coerciveness and dropped in all further considerations.
Let $\{u_n\}_{n\in\N}\subset H^{1}(\Gamma_H\times\T)$ be a minimizing sequence, i.e.
$J^{\alpha}(u_1)\geq J^{\alpha}(u_n)\to\inf_{u\in H^1(\Gamma_H\times\T)} J^{\alpha}(u)$.
For every thoracic potential $z \in L^2(\Gamma\times\mathbb T)$, the associated sequence $J^{\alpha}(u_n)$ is constrained from below by $0$ and from above due to the boundedness of the forward operator $A$.
By the definition of $\TV_\epsilon^\alpha$ we directly obtain that there exists $C>0$ such that
$J^{\alpha}(u_1)\geq J^{\alpha}(u_n)\geq C \norm{u_n}^2_{H^1(\Gamma_H\times\T)}$
for all $n\in\mathbb{N}$.
Since $H^1(\Gamma_H\times\T)$ is Hilbert, there exists a subsequence converging weakly to $u^\ast\in H^1(\Gamma_H\times\mathbb \T)$.
Due to the linearity of $A$ and the convexity of $J$ we can deduce that $J^{\alpha}$ is sequentially lower semi-continuous, which implies $J^{\alpha}(u^\ast) \leq \inf_{u\in H^1(\Gamma_H\times\T)} J^{\alpha}(u)$. 
Since $A$ is injective, $G$, and consequently $J^\alpha$, is strictly convex. 
Hence, the minimizer is unique.
\end{proof}

\section{Discretization}\label{sec:discretization}
In this section, we propose a spatiotemporal discretization scheme for the inverse problem in ECGI introduced in \Cref{sec:space-timeECGI}.

\subsection{Discrete Function Spaces}
In what follows, we briefly present the discrete function spaces used throughout this manuscript using the finite element method  (FEM). 
The discrete space-time function space $\Vh$ approximates $H^1\left(\Gamma_H\times\T\right)$ with $\Pf_1$ finite elements in space on an adaptive surface grid $\mathcal{T}_h$ and in time on a grid $\mathcal{S}_h$ with grid size function $h$.
In detail, we assume that measurements are available at $S+1$ time points $0=t_0<\ldots<t_S=\bar{t}$ admitting $S$ time intervals.
For each time point, the spatial surface grid is identical and comprises affine ($d=2$)/triangular ($d=3$) elements.
We denote by $\pmb{\Xi}\coloneqq\left\{\pmb{\xi}_i\right\}_{i=1}^{N_{\Vcal}}$ the nodes of $\mathcal{T}_h$ that are contained in $\Gamma_H$ and by $\Gamma^h_H$ the set of all elements for which the corresponding nodes are all contained in $\pmb{\Xi}$.
Likewise, $\T^h$, $\Omega_0^h$, and $ \Gamma^h$ describe the discretizations of $\T$, $\Omega_0$ and $\Gamma$, where
we tacitly assume that each connected component of $\Sigma^h$ is comprised of at least one node of $\Gamma^h$.
Combining the spatial elements $L\in\mathcal{T}_h$ and temporal elements $J\in\mathcal{S}_h$ with the tensor product results in rectangular ($d=2$)/prismatic ($d=3$) elements $L\otimes J\in\mathcal{T}_h\otimes \mathcal{S}_h$.
Consequently, the discrete function space $\Vh$ is defined by
\begin{equation*}
\Vh \coloneqq \{v\in H^1\left(\Gamma_H\times\T\right)\colon
v|_{J\otimes K}\in \Pf_1(L)\otimes \Pf_1(J)\quad
\forall L \in\mathcal{T}_h, J\in \mathcal{S}_h \}
\end{equation*}
with piecewise affine spatial and temporal basis functions $\left\{\varphi_i\right\}_{i=1}^{N_{\Vcal}}$ and $\left\{\rho_s\right\}_{s=0}^{S}$, respectively.
The discrete function $u_h\in\Vh$
\[
u_h(\x,t)\coloneqq\sum_{s=0}^{S}\sum_{i=1}^{N_{\Vcal}} u_{i,s}\varphi_i(\x)\rho_s(t)
\]
with $u_{i,s}$ being the evaluation of $u_h$ at the node $(\pmb{\xi}_i,t_{s})$.
The associated discrete gradient space is induced by the integration rules of the discrete gradient applied in the optimization algorithm.
Since differentiating $\Pf_1$ functions results in $\Pf_0$ functions, applying a spatial and temporal gradient induces a gradient space with mixed $\Pf_1$ and $\Pf_0$ basis functions.
The gradient space $\Qho$ is composed of all vector-valued functions $p_h=(p_{h_1},\ldots,p_{h_{d+1}})^\top$ with $p_{h_k}\in L^2(\Gamma_H\times\T)$ for $k\in\llbracket d+1\rrbracket$, which are piecewise constant in space and affine in time in the first $d$~components representing the spatial gradients and piecewise constant in time and affine in space in the last component representing the temporal gradient, i.e.
\begin{equation*}
\Qho \coloneqq \left\{ p\in (L^2\left(\Gamma_H\times\T\right))^{d+1} \colon\begin{array}{l}
    p_k|_{L\otimes J}\in \Pf_0(L)\otimes \Pf_1(J),k\in\llbracket d\rrbracket\\
    p_{d+1}|_{L\otimes J}\in \Pf_1(L)\otimes \Pf_0(J),
\end{array} L\in\mathcal{T}_h, J\in \mathcal{S}_h\right\}
\end{equation*}
with piecewise constant polynomials $\{\vartheta_l\}_{l=1}^{N_{\Qcal}}$ for $N_{\Qcal}\in\N$ and $\{\varrho_j\}_{j=1}^{S}$ representing a $\Pf_0$ finite element basis of $\Gamma_H^h$ and $\T^h$, respectively.
First-order integration rules are applied to compute the regularizer function $F^\alpha$.
We choose a first-order quadrature rule with evaluation at the nodes of each spatial $\int_L f(\x)\dx\x\approx |L|/d \sum_{i=1}^d f(\pmb{\xi}_i)$ for $d\in\{2,3\}, L\in\mathcal{T}_h$ and temporal element $\int_J f(t)\dx t\approx |J|/2 (f(t_{s}) + f(t_{s-1}))$ for $J\in\mathcal{S}_h$.

The optimization involving space and time gradients in the $L^{2,1}$-norm poses challenges due to the continuity of the gradients in the complementary dimension, which results in interdependencies.
Since conformal computation of the gradients is required, optimizing discretized functions on space-time elements $L\otimes J$ element-wise, independent of neighboring element,s can be beneficial for the evaluation of the gradients.
However, element-wise independent optimization does not induce a finite element space due to discontinuities between adjacent elements.
Such a space $\Qht$ consists of elementwise continuous functions in space and time
\begin{equation*}
\Qht \coloneqq \{p\in (L^2\left(\Gamma_H\times\T\right))^{d+1}\colon
p_k|_{L\otimes J}\in \Pf_1(L)\otimes \Pf_1(J), k\in \llbracket d+1 \rrbracket, L \in\mathcal{T}_h, J\in \mathcal{S}_h\}.
\end{equation*}
An illustration of the spatial and temporal gradients on a prismatic finite element for both gradient spaces can be seen in \Cref{fig:quadratures}.
In this paper, we consider the $\TV_\epsilon^\alpha$ method for $\alpha=1$ in the gradient space $\Qho$ and for $\alpha=2$ in $\Qht$, even though  $\alpha=1$ would also be possible for the gradient space $\Qht$.
\begin{figure}[htp]
\label{fig:quadratures}\centering{
\begin{tikzpicture}
    \node[inner sep=0pt] (A) at (0,0){\includegraphics[clip, trim=0cm 0.2cm 0cm 0cm,width=0.7\textwidth]{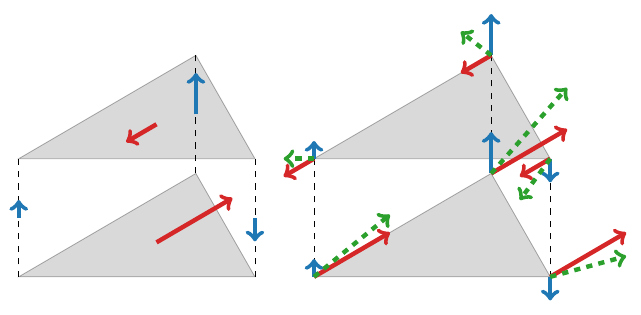}};
\node[inner sep=0pt] at (-2.5,-2.5) {$\Qho$};
\node[inner sep=0pt] at (2,-2.5) {$\Qht$};

\draw[red,->, line width=0.75mm] (5,0.8) -- (6,0.8);
\draw[blue,->, line width=0.75mm] (5,0.2) -- (6,0.2);
\draw[green,->, dashed, line width=0.75mm] (5,-0.4) -- (6,-0.4);

\node[inner sep=0pt] at (6.5,0.8) {$\nabla_\x$};
\node[inner sep=0pt] at (6.5,0.2) {$\nabla_t$};
\node[inner sep=0pt] at (6.7,-0.4) {$\nabla_\x + \nabla_t$};

\draw[gray, opacity = 0.3, line width = 0.3 cm] (5,-1) -- (6,-1);
\draw [dashed] (5,-1.6) -- (6,-1.6);

\node[inner sep=0pt] at (6.6,-1) {$L\in\mathcal{T}_h$};
\node[inner sep=0pt] at (6.6,-1.6) {$J\in\mathcal{S}_h$};

\draw  (4.7,-2) rectangle (7.5,1.3);

\end{tikzpicture}}
\caption{Basic illustration of the gradient computation on a prismatic finite element $L\otimes J\in\mathcal{T}_h\otimes \mathcal{S}_h$ in spatial dimension $d=3$ for different gradient spaces $\Qho$ and $\Qht$.}
\end{figure}

The discretized functions are defined by the basis functions of the finite element spaces and a \emph{vector of nodal values}.
To simplify notation, we iterate over the indices of vectors for each of their dimension separately and define a vector of nodal values for $u_h\in\Vh$ as $\mathbf{u} = (u_{1,0},\ldots,u_{N_{\Vcal},0},u_{1,1},\ldots,u_{N_{\Vcal},S})^\top\in\R^{N_{\Vcal} (S+1)}$.
Following the definition of $p_h\in\Qho$ as a vector of $d+1$ functions, the \emph{vector of element values} is defined as a concatenation of vectors $\p = \left(\p_{1},\ldots,\p_{d+1}\right)^\top$ for all spatial derivatives with $\p_{k}\in\R^{N_{\Qcal}(S+1)}$  for $k\in\llbracket d\rrbracket$ and $\p_{d+1}\in\R^{N_{\Vcal}S}$.
Note that the vector of element values for $p_h\in\Qht$ is an element of the space $\R^{(d+1) N_{\Qcal^2}}$ with $N_{\Qcal^2}\coloneqq 2\cdot d \cdot N_\Qcal\cdot S$ representing $d+1$ gradients on $2\cdot d$ evaluation nodes of the $N_\Qcal\cdot S$ rectangular/prismatic elements of $\mathcal{T}_h\otimes \mathcal{S}_h$ as illustrated in \Cref{fig:quadratures}.
We compute scalar products in matrix-vector notation by defining the 
spatial and temporal mass matrices $\M^{\Pf_1}$ and $\D^{\Pf_1}$ for $\Vh$ as follows:
\begin{equation*}
\M^{\Pf_1} = \left(\int_{\Gamma_{H}^h}\varphi_i(\x)\varphi_j(\x)\dx \x\right)_{i,j=1}^{N_{\Vcal}}, \quad \D^{\Pf_1} = \left(\int_{\T^h}\rho_i(\x)\rho_j(\x)\dx \x\right)_{i,j=0}^{S}.
\end{equation*} 
The respective spatial and temporal mass matrices $\M^{\Pf_0}$ and $\D^{\Pf_0}$ for the weights of the gradient spaces read as
\begin{equation*}
\M^{\Pf_0} = \mathrm{diag}\left(\int_{\Gamma_{H}^h}{\vartheta}_{i}(\x)\dx\x\right)_{i=1}^{N_{\Qcal}}, \quad \D^{\Pf_0} = \mathrm{diag}\left(\int_{\T^h}{\varrho}_{i}(\x) \dx\x\right)_{i=1}^{S}.
\end{equation*}
The corresponding block diagonal matrices for the number of timesteps and different quadrature rules are defined with the help of the Kronecker product $\tens{K}$ for $\Vh$ and the gradient spaces $\Qho$ and $\Qht$ as
\begin{align*}
   &\M^{\Vcal} = \mathbf{I}_{S+1}\tens{K}\M^{\Pf_1},
    \quad \M^{\Qo,\Pf_1}= \mathbf{I}_{S}\tens{K}\M^{\Pf_1},\\
    &\M^{\Qo,\Pf_0}= \mathbf{I}_{S+1}\tens{K}\M^{\Pf_0},\quad \M^{\Qt}= \mathbf{I}_{(d+1)\frac{N_{\Qcal^2}}{N_{\Qcal}}}\tens{K}\frac{1}{d}\M^{\Pf_0}.
\end{align*}
Since $\D^{\Pf_1}$ is a tridiagonal matrix, the temporal block mass matrices induce weights in block tridiagonal matrices for $\Pf_1$ finite elements and block diagonal matrices for $\Pf_0$ finite elements, i.e.
\begin{align*}
&\D^{\Vcal}  = \D^{\Pf_1}\tens{K} \mathbf{I}_{N_\Vcal}, \quad \D^{\Qo,\Pf_1}  = \D^{\Pf_1}\tens{K} \mathbf{I}_{N_\Qcal},\quad \D^{\Sigma}  = \D^{\Pf_1}\tens{K} \mathbf{I}_{N_\Sigma},\\
&\D^{\Qo,\Pf_0}  = \D^{\Pf_0}\tens{K} \mathbf{I}_{N_\Vcal},\quad \D^{\Qt} = \frac{1}{2}\D^{\Pf_0} \tens{K} \mathbf{I}_{(d+1)\frac{N_{\Qcal^2}}{S}}.
\end{align*}
We want to point out that all mass matrices are symmetric positive definite by construction since the Kronecker product of two symmetric positive definite matrices is symmetric and positive definite.
Furthermore, the spatial and temporal mass matrices in the same spaces commute by the \emph{mixed-product property} since 
\begin{equation*}
(\mathbf{X}\tens{K} \mathbf{I}_N)(\mathbf{I}_M\tens{K} \mathbf{Y})=(\mathbf{X}\mathbf{I}_M)\tens{K}(\mathbf{I}_N\mathbf{Y})=(\mathbf{Y}\tens{K}\mathbf{I}_M)(\mathbf{I}_N\tens{K} \mathbf{X})
\end{equation*}
for $(\mathbf{X},\mathbf{Y})\in\R^{M\times M}\times\R^{N\times N}$ and $N, M\in\N$.
The corresponding lumped mass matrices of $\M^\Xcal$ and $\D^\Xcal$ for a set $\Xcal$ are computed by
\begin{equation*}
\widetilde{\M}^\Xcal \coloneqq \mathrm{diag}\left(\sum_j \M^{\Xcal}_{i,j}\right)_i, \quad \text{and} \quad \widetilde{\D}^\Xcal \coloneqq \mathrm{diag}\left(\sum_j \D^{\Xcal}_{i,j}\right)_i.
\end{equation*}
For the quadrature rules in $\Qho$ in the regularizer function $F^1$, we define the vectors of nodal values in time $\mathbf{d}^{\Pf_1}\in\R^{S}$ and space $\mathbf{m}^{\Pf_1}\in\R^{N_{\Vcal}}$ for the first-order quadrature rule as follows: 
\begin{align*}
d^{\Pf_1}_s &= \frac{1}{2}\sum_{j=1}^{2} |J_j^s|, \quad s\in\llbracket S\rrbracket_0\quad \text{with} \quad t_s\in J_j^s\quad \text{for} \quad J_j^s\in\mathcal{S}_h,\\
m^{\Pf_1}_i &= \frac{1}{d}\sum_{l=1}^{N_{\pmb{\xi}_i}}\vert L_l^i\vert, \quad i\in\llbracket N_{\Vcal}\rrbracket\quad \text{with} \quad \pmb{\xi}_i\in L_l^i\quad \text{for} \quad \pmb{\xi}_i\in \pmb{\Xi}, L_l^i\in\mathcal{T}_h,
\end{align*}
with $N_{\pmb{\xi}_i}$ representing the number of elements for which $\pmb{\xi}_i$ is part of.
The induced $L^2$-inner product of $\Vh$ results in
\begin{align*}
    (u_h,v_h)_{\Vh} &=\int_{\Gamma_H^h \times \T^h} u_h(\x,t) v_h(\x,t)\dx(\x,t)    = \uv^\top  \D^{\Vcal} \M^{\Vcal}\mathbf{v}.
\end{align*}
The standard $L^2$ scalar product for the space $\Qho$ is split into the spatial and temporal gradient parts with different weights
\begin{align*}
    \left(p_h, q_h\right)_{\Qho} &=\int_{\Gamma_H^h \times \T^h} p_h(\x,t) \cdot q_h(\x,t)  \mathrm{d}(\x,t) \\
&=\sum_{k=1}^d\p_{k}^\top  \D^{\Qo,\Pf_1} \M^{\Qo,\Pf_0}\mathbf{q}_{k} + \p_{d+1}^\top  \D^{\Qo,\Pf_0} \M^{\Qo,\Pf_1}\mathbf{q}_{d+1}
\end{align*}
and for $\Qht$ as
\begin{align*}
    \left(p_h, q_h\right)_{\Qht} &=\int_{\Gamma_H^h \times \T^h} p_h(\x,t) \cdot q_h(\x,t)  \mathrm{d}(\x,t) =\p^\top  \D^{\Qt} \M^{\Qt}\mathbf{q}.
\end{align*}

\subsection{Total Variation Operator}
We introduce the discretized weighted total variation operator $K_h$ as a splitting of the gradient $\nabla_{(\x,t)}$ in the spatial and the temporal part weighted by the matrix $\Lambda$.
The spatial part computes $d$-dimensional directional derivatives $\nabla_\x$ on the heart surface incorporating a $\Pf_1$ element basis (see \cite{Ch10}).
For each time interval, a temporal derivative is computed in a $\Pf_1$ finite element basis.
The weighted total variation operator $K_h$ is defined as $K_h u_h\coloneqq(\lambda_{\gamma}\nabla_\x u_h,\lambda_t\nabla_t  u_h)^\top$.
Since the finite element spaces are fixed for each weighted total variation operator, the matrix representations $\K^\alpha$ of $K_h$ for $\alpha\in\{1,2\}$ applied on the vector of nodal values $\mathbf{u}\in \R^{N_{\Vcal}(S+1)}$ of $u_h\in \Vh$ is related to the corresponding gradient space $\Qh^\alpha$.
The matrix representation $\K^1 =(
\hat{\K}^1,\ldots, \hat{\K}^{d+1})^\top$
induced by the gradient space $\Qho$ is divided into space gradient matrices $\hat{\K}^k: \R^{N_{\Vcal}(S+1)}\to \R^{N_{\Qcal}(S+1)}$ with $k\in\llbracket d \rrbracket$ and a time gradient matrix $\hat{\K}^{d+1}: \R^{N_{\Vcal} (S+1)}\to \R^{N_{\Vcal} S}$ via
\begin{equation*}
(\hat{\K}^k \mathbf{u})_{l,j}  = \lambda_{\gamma}\sum_{s=0}^{S}\sum_{i=1}^{N_{\Vcal}}K^k_{l,j,i,s}u_{i,s} = \lambda_{\gamma}\sum_{i=1}^{N_{\Vcal}}\nabla_{x_k}\varphi_i(\x)|_{L_l}u_{i,j}
\end{equation*}
for $L_l\in\mathcal{T}_h$ and $(l,j)\in\llbracket N_{\Qcal}\rrbracket\times\llbracket S\rrbracket_0$.
The time gradient matrix is defined as 
\begin{equation*}
(\hat{\K}^{d+1} \mathbf{u})_{l,j} = \lambda_t\sum_{s=0}^{S}\sum_{i=1}^{N_{\Vcal}}K^{d+1}_{l,j,i,s}u_{i,s}= \lambda_t\sum_{s=0}^S \nabla_t \rho_{s}(t)|_{J_j} u_{l,j}
\end{equation*}
for $J_j\in \mathcal{S}_{h}$ and $(l,j)\in\llbracket N_{\Vcal}\rrbracket\times\llbracket S\rrbracket$.

By definition of the gradient space $\Qht$, the weighted total variation  operator $\K^2$ computes the gradients and interpolates them elementwise with a matrix $\mathbf{P}$ to the nodes of each element such that $\K^2\coloneqq \mathbf{P}\cdot \K^1:\R^{N_{\Vcal}(S+1)}\to \R^{(d+1)N_{\Qcal^2}}$.
The adjoint~$K_{h}^*$ of the weighted total variation operator $K_{h}$ is implicitly defined by  
\begin{equation*}
    \left(K_{h} u_h,  p_h\right)_{\Qh^\alpha} =\left(u_h,  K_{h}^* p_h\right)_{\Vh}\quad \text{with} \quad \left(u_h,  K_{h}^* p_h\right)_{\Vh} = \mathbf{u}^\top\D^{\Vcal} \M^{\Vcal}(\K^\alpha)^* \p
\end{equation*}
for $\alpha\in\{1,2\}$.
By definition of the scalar product in $\Qho$, we can compute
\begin{equation*}
    \left(K_{h} u_h,  p_h\right)_{\Qho} = \sum_{k=1}^d(\hat{\K}^{k}\mathbf{u})^\top  \D^{\Qo,\Pf_1} \M^{\Qo,\Pf_0}\p_{k}+ (\hat{\K}^{d+1}\mathbf{u})^\top  \D^{\Qo,\Pf_0} \M^{\Qo,\Qo,\Pf_1}\p_{d+1}.
\end{equation*}
Thus, the adjoint weighted total variation operator results in
\begin{equation*}
    (\hat{\K}^k)^* = (\M^{\Vcal})^{-1} (\hat{\K}^k)^\top \M^{\Qo,\Pf_0} \quad \text{and}\quad (\hat{\K}^{d+1})^* = (\D^{\Vcal})^{-1} (\hat{\K}^{d+1})^\top \D^{\Qo,\Pf_0}.
\end{equation*}
Likewise, the adjoint of $\K^2$ in the gradient space $\Qht$ satisfies
\begin{equation*}
    \left(K_{h} u_h,  p_h\right)_{\Qht} = (\K^2 \mathbf{u})^\top  \D^{\Qt} \M^{\Qt}\mathbf{q},
\end{equation*}
resulting in
\begin{equation*}
    (\K^2)^* = (\D^{\Vcal})^{-1} (\M^{\Vcal})^{-1} (\K^2)^\top \D^{\Qt} \M^{\Qt}.
\end{equation*}

\subsection{Forward Operator}
Next, we discretize the forward operator $A$ in the finite element setting following~\cite{Wa10}.
Since the epicardium is assumed to be rigid over time, we can compute the forward operator for fixed time $\tilde{t}\in\T$, i.e. $u_h(\x,\tilde{t})=u_h(\x)$ for all $u_h\in\Vh$ in this section.
We reformulate the weak formulation of~\eqref{eq:E_forward} by Green's identity with a test function $\psi_h$ defined on the $\Omega_0^h, \Gamma_H^h$, and $\Gamma^h$ as
\begin{equation*}  
0=\int_{\Omega_0^h}\sigma(\x)\nabla_{\mathbf{x}} v_h(\x)\cdot\nabla_{\mathbf{x}} \psi_h(\x)\mathrm{d}\x-\int_{\Gamma_H^h}\sigma(\x)\nabla_{\mathbf{x}} u_h(\x)\cdot \mathbf{n}\psi_h(\x)\mathrm{d}\x.
\end{equation*}
Then, we evaluate the weak formulation for all basis functions $\left(\varphi_{k}^{\Xcal}\right)_{k=1}^{N_{\Xcal}}$ on $\Xcal\in\left\{\Omega_0, \Gamma,\Gamma_H\right\}$ with $\varphi_{k}^{\Gamma_H}\coloneqq \varphi_{k}$ and $N_{\Gamma_H}\coloneqq N_{\Vcal}$ on $\Gamma_H$
\begin{align*}
&\sum_{l=1}^{N_{\Omega_0}}v_{l}\int_{\Omega_0^h}\sigma\nabla_{\mathbf{x}} \varphi_{l}^{\Omega_0}\cdot\nabla_{\mathbf{x}} \varphi_{k}^{\Xcal}\mathrm{d}\x+\sum_{j=1}^{N_\Gamma}\left(v|_\Gamma\right)_{j}\int_{\Gamma^h}\sigma\nabla_{\mathbf{x}} \varphi_j^{\Gamma}\cdot\nabla_{\mathbf{x}} \varphi_{k}^{\Xcal}\mathrm{d}\x\\
=&- \sum_{i=1}^{N_{\Gamma_H}}u_{i}\int_{\Gamma_H^h}\sigma\nabla_{\mathbf{x}} \varphi_i^{\Gamma_H}\cdot\nabla_{\mathbf{x}} \varphi_{k}^{\Xcal}\mathrm{d}\x.
\end{align*}
Rewriting the integrals with stiffness matrices results in
\begin{equation*}
    \begin{pmatrix}
        \A_{\Omega_0\Omega_0} & \A_{\Omega_0\Gamma}\\
         \A_{\Gamma\Omega_0} & \A_{\Gamma\Gamma}\\
        \end{pmatrix} 
        \begin{pmatrix}
         \mathbf{v} \\
          \mathbf{v}|_\Gamma \\
        \end{pmatrix}= -
    \begin{pmatrix}
        \A_{\Omega_0\Gamma_H}\\
         \A_{\Gamma\Gamma_H}
        \end{pmatrix}
        \mathbf{u}
\end{equation*}
with matrices
\begin{equation*}
\A_{\Xcal\mathcal{Y}} = \left(\int_{\Omega_0^h}\sigma(\x) \nabla_\mathbf{x}\varphi_i^{\Xcal}(\x)\nabla_\mathbf{x}\varphi_j^{\mathcal{Y}}(\x)\dx\x\right)_{1\leq i\leq N_\Xcal,1\leq j\leq N_\mathcal{Y}}
\end{equation*}
and $\Xcal,\mathcal{Y}\in\left\{\Omega_0,\Gamma,\Gamma_H\right\}$.
We explicitly compute the forward operator by taking advantage of $\A_{\Gamma\Gamma_H}=0$ and the invertibility of the stiffness matrix $\A_{\Omega_0\Omega_0}$.
Thus,
\begin{equation*}
\mathbf{v}|_{\Gamma} = \A^\Gamma\mathbf{u}, \quad \text{with} \quad\A^\Gamma = \left(\A_{\Gamma\Gamma}-\A_{\Gamma\Omega_0}\A_{\Omega_0\Omega_0}^{-1}\A_{\Omega_0\Gamma}\right)^{-1}\A_{\Gamma\Omega_0}\A_{\Omega_0\Omega_0}^{-1}\A_{\Omega_0\Gamma_H}.
\end{equation*}
The forward operator needs to be restricted on the set $\Sigma^h$ depicting the nodes on the torso where the potential is measured with electrodes such that $\A^\Sigma\subset\A^\Gamma$.
Thus, for $\mathbf{v}|_{\Sigma}\subseteq \mathbf{v}|_{\Gamma}$ we obtain $\mathbf{v}|_{\Sigma} = \A^\Sigma\mathbf{u}$.
Finally, we construct the block diagonal matrix for all timesteps $\llbracket S\rrbracket_0$ via
\begin{equation*}
\A=\mathbf{I}_{S+1}\tens{K}\A^\Sigma.
\end{equation*}

\section{Primal-Dual Optimization Algorithm}\label{sec:Primal-DualOptimization}
This section presents details on the primal-dual algorithms to solve the inverse problem in a finite element setting.

\subsection{Problem Formulation}
In what follows, we consider the subsequent convex finite element-based variational problem 
\begin{equation*}\label{eq:E_discretized}
    \min_{u_{h}\in \Vh} \frac{1}{2 |\Sigma^h|N_\Sigma}\int_{\Sigma^h\times\T^h}\left(A[u_{h}]-z_h\right)^2 \dx (\x,t)  +\int_{\Gamma_H^h\times\T^h}\TV_\epsilon^\alpha\left(K_h u_{h}\right)\dx (\x,t)
\end{equation*}
for $\alpha\in\{1,2\}$. 
Furthermore, we define the discretized functions of \eqref{eq:G_cont} and \eqref{eq:F_cont} as $G_h$ and $\Fa$, respectively, which are proper, convex, and lower semi-continuous.
Exploiting the properties of the convex conjugate \cite{Bo04}, we transform this problem into a saddle point via
\begin{equation}\label{eq:E_saddle}
    \min_{u\in\Vh} \max_{p_h\in\Qh^\alpha} G_h(u_h) + \left( K_h u_h, p_h \right)_{\Qh^{\alpha}} - (\Fa)^*(p_h),
\end{equation}
with $(\Fa)^*$ being the convex conjugate of $\Fa$, which is proper, convex, and lower semi-continuous.
Since these functions can be fixed to a finite element discretization and we only optimize the real-valued vectors of nodal and element values, we define $\mathbf{G}_h$ and $\mathbf{F}^\alpha_h$ as representations of $G_h$ and $F_h^\alpha$ such that
\begin{equation}
\mathbf{G}_h(\mathbf{u})\coloneqq G_h(u_h) \quad \text{and} \quad \mathbf{F}_h^\alpha(\mathbf{p})\coloneqq F_h^\alpha(p_h).
\end{equation}
Next, we compute the matrix-vector representation of the energy function. 
All computations for the total variation-based methods are computed using lumped mass matrices in space and time to improve computational time.
As a result, lumped mass matrices are applied in the computations in this section to simplify the problem at hand.
On the discretized set $\Sigma^h$, we assume the spatial weights to vanish since the size of the electrodes used to measure the BSPM can be chosen arbitrarily small.
Therefore, the function $\mathbf{G}_h$ can be expressed using matrix-vector notation as a time series without spatial weights
\begin{equation*}
\mathbf{G}_h(\mathbf{u})  = \frac{1}{2 N_\Sigma} \left(\A\uv-\mathbf{z}\right)^\top \widetilde{\D}^{\Sigma}\left(\A\uv-\mathbf{z}\right).
\end{equation*}
In the numerical computation of the regularizer, we assume $0<\epsilon\ll 1$ such that only the non-quadratic total variation regularization occurs to facilitate the presentation.
By assuming the boundedness of all involved functions and utilizing compactness arguments, it can be shown that there exists a mesh-dependent constant $\epsilon_0$ such that, for all $\epsilon<\epsilon_0$, only the non-quadratic components of the regularization are active.

For the gradient space $\Qho$, we compute the regularizer $\Fob(\p)$ by splitting the $L^1$ norm into the sum over separate integrals $\Fokb(\p)$ and $\Fodb(\p)$ for each spatial gradient dimension $k\in\llbracket d\rrbracket$ and the temporal gradient dimension, i.e.
\begin{align*}
\Fob(\p)&=\sum_{k=1}^d\Fokb(\mathbf{p})+\Fodb(\mathbf{p}) \quad \text{with} \quad\\
\Fokb(\p) &=\sum_{s=0}^S \sum_{l=1}^{N_{\Qcal}} d^{\Pf_1}_s\M^{\Pf_0}_{l,l}|p_{k,l,s}|,
\quad k\in\llbracket d \rrbracket,\\
\Fodb(\p) &= \sum_{j=1}^S \sum_{i=1}^{N_{\Vcal}}\D^0_{j,j}m_i^{\Pf_1}|p_{d+1,i,j}|
\end{align*}
Next, we compute the regularizer $\Ft$
\begin{align*}
&\Ftb(\p) =\sum_{j=1}^{S}\sum_{l=1}^{N_{\Qcal}}\sum_{i=1}^{d}\sum_{s=1}^{2}\frac{\D^0_{j,j}}{2}\frac{\M^{0}_{l,l}}{d}\norm{p_{i,l,s,j}}_{2}
\end{align*}
with the $\ell^2$ norm over the gradient dimension $\norm{p_{i,l,s,j}}_{2}=\sqrt{\sum_{k=1}^{d+1}p_{k,i,l,s,j}^2}$.
For optimization of the saddle point problem \eqref{eq:E_saddle}, we employ the first-order primal-dual algorithm \ref{alg:algorithm} (see \cite{Cha11}).
This method iteratively updates primal and dual variables according to the proximal mappings of $G_h$ and $(\Fa)^*$ starting from an initial guess.
Recall that the proximal mapping of  $H:\mathcal{X} \to \mathcal{Y}$ is defined as 
\begin{equation*}
\prox_{H} (\widetilde{\mathbf{x}}) = \argmin_{\mathbf{x}\in\mathcal{X}} \frac{1}{2} \norm{\mathbf{x} - \widetilde{\mathbf{x}}}_{\mathcal{X}}^2 + H(\mathbf{x}).
\end{equation*}
\begin{algorithm}[tb]
\caption{First-Order Primal-Dual Algorithm}\label{alg:algorithm}
\begin{enumerate}
    \item \textbf{Initialization:} Choose $\tau, \sigma > 0$ with $\tau\sigma\norm{K_h}^2_{(\Vh, \Qh^{\alpha})}\leq 1$, $\theta \in [0, 1]$, $\textbf{u}^0\sim \mathcal{N}(0,1)$, $\p^0=\K^{\alpha}\textbf{u}^0$, and set $\bar{\textbf{u}}^0 = \textbf{u}^0$.
    \item \textbf{Iterations} ($n \geq 0$): Update $\textbf{u}^n, \textbf{p}^n, \bar{\textbf{u}}^n$ as follows:
    \[
    \begin{cases}
        \textbf{p}^{n+1} = \prox_{\sigma (\Fab)^*}(\textbf{p}^n + \sigma \K^{\alpha} \bar{\textbf{u}}^n) \\
        \textbf{u}^{n+1} = \prox_{\tau \mathbf{G}_h}(\textbf{u}^n - \tau (\K^{\alpha})^*\textbf{p}^{n+1}) \\
        \bar{\textbf{u}}^{n+1} = \textbf{u}^{n+1} + \theta(\textbf{u}^{n+1} - \textbf{u}^n)
    \end{cases}
    \]
\end{enumerate}
\end{algorithm}
Subsequently, we compute the proximal mappings for the optimization algorithm: 
\begin{equation*}
\prox_{\tau \mathbf{G}_h} (\widetilde{\mathbf{u}}) =\argmin_{\mathbf{u}\in\R^{N_{\Vcal} (S+1)}}\frac{1}{2}(\mathbf{u}-\widetilde{\mathbf{u}})^\top\widetilde{\D}^\Vcal\widetilde{\M}^\Vcal(\mathbf{u}-\widetilde{\mathbf{u}})+\frac{\tau}{2 N_\Sigma} \left(\A\uv-\mathbf{z}\right)^\top \widetilde{\D}^{\Sigma}  (\A\uv-\mathbf{z}).
\end{equation*}
Differentiating with respect to $\mathbf{u}$ and rearranging the optimality condition results in 
\begin{equation*}
\mathbf{u} =\left(\frac{\tau}{N_\Sigma} (\A^\top\widetilde{\D}^{\Sigma}\A)+\widetilde{\D}^\Vcal\widetilde{\M}^\Vcal\right)^{-1}\left(\frac{\tau}{N_\Sigma} \A^\top \widetilde{\D}^{\Sigma} \mathbf{z}+ \widetilde{\D}^\Vcal\widetilde{\M}^\Vcal\widetilde{\mathbf{u}}\right).
\end{equation*}
As a consequence of the mixed-product property of the Kronecker product $\tens{K}$ and matrix multiplication, the temporal weights vanish since $\A^\top\widetilde{\D}^{\Sigma}=\widetilde{\D}^{\Vcal}\A^\top$, which readily implies 
\begin{equation*}
\prox_{\tau \mathbf{G}_h}(\widetilde{\mathbf{u}})  = \left(\frac{\tau}{N_\Sigma} (\A^\top\A)+\widetilde{\M}^\Vcal\right)^{-1}\left(\frac{\tau}{N_\Sigma} \A^\top \mathbf{z}+ \widetilde{\M}^\Vcal\widetilde{\mathbf{u}}\right).
\end{equation*}
For the proximal mappings of $(\Fob)^*$ and $(\Ftb)^*$ we initially compute the convex conjugate functions starting with $(\Fokb)^*(p_{h})$ for $k\in\llbracket d\rrbracket$ and $(\Fodb)^*(p_{h})$
\begin{equation*}
(\Fokb)^*(\p) =\delta_{C^1_k}(\mathbf{p}_{k}),\quad \text{for}\quad k\in\llbracket d \rrbracket,\quad\text{and}\quad(\Fodb)^*(\p) =\delta_{C^1_{d+1}}(\mathbf{p}_{d+1}),
\end{equation*}
for the sets
\begin{align*}
C^1_k&\coloneqq \left\{ \mathbf{p}_k:\frac{\widetilde{\D}^{\Pf_{1}}_{s,s}}{d^{\Pf_{1}}_s}|p_{k,l,s}| <1\quad\forall (l,s)\in\llbracket N_\Qcal\rrbracket\times\llbracket S\rrbracket_0 \right\}, \quad k\in\llbracket d \rrbracket,\\
C^1_{d+1}&\coloneqq \left\{ \mathbf{p}_{d+1}:\frac{\widetilde{\M}^{\Pf_1}_{i,i}}{m^{\Pf_1}_i}|p_{d+1,i,j}|<1\quad\forall (i,j)\in\llbracket N_\Vcal\rrbracket\times\llbracket S\rrbracket \right\},
\end{align*} 
as well as the convex conjugate of $(\Ftb)^*$
\begin{equation*}
(\Ftb)^*(\p) =
\delta_{C^2}(\mathbf{p}),
\end{equation*}
for the set
\begin{equation*}
C^2\coloneqq \{\mathbf{p}:\norm{p_{i,l,s,j}}_2<1\quad\forall (i,l,s,j)\in\llbracket d\rrbracket\times\llbracket N_{\Qcal}\rrbracket\times\llbracket 2\rrbracket\times\llbracket S\rrbracket \}.
\end{equation*} 
The resulting proximal mappings for the gradient space $\Qho$ are evaluated separately for each element vector component
\begin{equation*}
\left(\prox_{\sigma (\Fokb)^*} (\widetilde{\p})\right)_{k,l,s} = \widetilde{p}_{k,l,s}/\max\left(1,\frac{\widetilde{\D}^{\Pf_1}_{s,s}|\widetilde{p}_{k,l,s}|}{d^{\Pf_1}_s}\right)
\end{equation*}
for $(k,l,s)\in\llbracket d\rrbracket\times\llbracket N_{\Qcal}\rrbracket\times\llbracket S\rrbracket_0$.
Likewise, the remaining proximal mapping reads as
\begin{equation*}
\left(\prox_{\sigma (\Fodb)^*} (\widetilde{\p})\right)_{d+1,i,j} =\widetilde{p}_{d+1,i,j}/\max\left(1,\frac{\widetilde{\M}^{\Pf_1}_{i,i}|\widetilde{p}_{d+1,i,j}|}{m^{\Pf_1}_i}\right)
\end{equation*}
for $(i,j)\in\llbracket N_{\Vcal}\rrbracket\times\llbracket S\rrbracket$.
Finally, the proximal mapping of $(\Ftb)^*$ is also computed vector element-wise as
\begin{equation*}
\left(\prox_{\sigma (\Ftb)^*} (\widetilde{\p})\right)_{k,i,l,s,j} 
=\widetilde{p}_{k,i,l,s,j}/\max\left(1,\norm{\widetilde{p}_{i,l,s,j}}_2\right)
\end{equation*}
for $(k,i,l,s,j)\in\llbracket d+1 \rrbracket\times\llbracket d\rrbracket\times\llbracket N_{\Qcal}\rrbracket\times\llbracket 2\rrbracket\times\llbracket S\rrbracket$.
The primal-dual algorithm \ref{alg:algorithm} requires the Lipschitz constant of the weighted total variation operator $K_h$ as it is essential for the convergence
\begin{equation*}   \norm{K_h}^2_{\left(\Vh,\Qh^{\alpha}\right)} = \sup_{u_h\in\Vh}\frac{\norm{K_h u_h}_{\Qh^{\alpha}}^2}{\norm{u_h}_{\Vh}^2} = \lambda_{\mathrm{max}}((\K^{\alpha})^*\K^{\alpha}).
\end{equation*}

\section{Numerical Results} \label{sec:numerical_results}
This section introduces different state-of-the-art finite element regularization methods for solving the inverse problem. 
Moreover, we describe the simulation of electrical activity on the epicardium, which serves as the ground truth function, and visualize the two- and three-dimensional reconstruction results for both the baseline methods and our novel total variation-based approach in the presence of noise.

\subsection{Baseline Methods}
The most common regularization approach applied in the inverse problem in electrocardiographic imaging is Tikhonov regularization~\cite{Cl15, Ti77}.
For the energy minimization problem incorporating Tikhonov regularization, we adopt the following model
\begin{equation}\label{eq:E_tikhonov}
    \min_{u_{h}\in \Vh} G_h(u_h)  +\frac{\lambda_\gamma}{2}\norm{L_h u_{h}}_{\Vh}^2 
\end{equation}
with $L_h$ being the identity $I_h$ for zero-order Tikhonov ($\Tz$) or the spatial gradient $\nabla_{\x}$ for first-order Tikhonov in space ($\To$).
Since the regularizer function of Tikhonov is differentiable, we compute the explicit solution for $\Tz$ and $\To$ by rearranging the optimality condition of \eqref{eq:E_tikhonov}.
For zero-order Tikhonov with $L_h = I_h$ differentiating with respect to $\mathbf{u}$ results in
\begin{equation*}
    0 = \frac{1}{ N_\Sigma}(\A^\top \D^\Sigma\A\mathbf{u}-\A^\top \D^\Vcal\mathbf{z}) + \lambda_\gamma\D^\Vcal\M^\Vcal \mathbf{u}.
\end{equation*}
The weights of the mass matrices $\D^\Sigma$ and $\D^\Vcal$ vanish after solving for $\mathbf{u}$ since $\D^\Sigma \A = \A \D^\Vcal$ and $\D^\Vcal\M^\Vcal = \M^\Vcal\D^\Vcal$.
Due to the remaining matrices $\A$ and $\M^\Vcal$ being timestep-wise uniform, we can solve the linear system independently at each timestep
\begin{equation*}
    \mathbf{u}(t_s) = (\frac{1}{ N_\Sigma}(\A^\Sigma)^\top\A^\Sigma + \lambda_\gamma \M^{\Pf_1})^{-1}\frac{1}{ N_\Sigma}(\A^\Sigma)^\top \mathbf{z}(t_s)
\end{equation*}
for each $t_s$ with $\mathbf{u}(t_s)\coloneqq (u_{1,s},\ldots,u_{N_\Vcal,s})^\top$ and $s\in\llbracket S\rrbracket_0$.
Following the same steps for $\To$, we compute
\begin{equation*}
    \mathbf{u}(t_s) = (\frac{1}{ N_\Sigma}(\A^\Sigma)^\top\A^\Sigma - \lambda_\gamma \Delta_{\x,h})^{-1}\frac{1}{ N_\Sigma}(\A^\Sigma)^\top \mathbf{z}(t_s),
\end{equation*}
with the discretized spatial Laplacian $\Delta_{\x,h}$.
As introduced in \cite{Os92} for BEM methods, we propose a method with a first-order Tikhonov term and additionally compare the reconstruction with an estimate of the epicardial potential $v_h\in\Vh$ which is chosen as the potential of the previous timestep. 
Since we need to solve this regularization iteratively, the optimization problem norms are solely spatial.
Let us define $\Vh^{t_s}$ as the space of all functions $u_h\in\Vh$ evaluated at time $t_s$.
With $\mathbf{u}(t_{-1})=(0,\ldots,0)^\top\in\R^{N_{\Vcal}}$ ensuring Dirichlet boundary conditions, the first-order Tikhonov method with additional time regularization ($\TTo$) is determined by minimizing for $u_h(t_s)\in \Vh^{t_s}$ time stepwise
\begin{equation*}\label{eq:E_tikhonov_time}
    \min_{u_{h}(t_s)} \frac{1}{2 N_{\Sigma}}\norm{(A[u_h]-z_h)(t_s)}_{L^2(\Sigma^h)}^2 +\frac{\lambda_\gamma}{2}\norm{\nabla_\x u_{h}(t_s)}_{\Vh^{t_s}}^2+\frac{\lambda_t}{2}\norm{u_h(t_{s})-u_h(t_{s-1})}_{\Vh^{t_s}}^2
\end{equation*}
for all $s\in\llbracket S\rrbracket_0$.
Solving the optimality condition for each timestep results in computing
\begin{equation*}
    \mathbf{u}(t_s) = (\frac{1}{ N_\Sigma}(\A^\Sigma)^\top\A^\Sigma - \lambda_\gamma \Delta_{\x,h} + \lambda_t \M^{\Pf_1})^{-1}(\frac{1}{ N_\Sigma}(\A^\Sigma)^\top \mathbf{z}(t_s)+\lambda_t\M^{\Pf_1}\mathbf{u}(t_{s-1})).
\end{equation*}

\subsection{Ground Truth Generation}
This section shows the approach for simulating a potential on a two-dimensional model (see \Cref{fig:2D_model}) containing lungs and $16$ torso electrodes~\cite{Gander2021UQ} and the three-dimensional rabbit torso-heart model with $32$ electrodes \cite{Mo22}.
The 2D model setup considers a simple cylindrical torso with the electrodes distributed around the torso surface. 
To simulate cardiac activation times $\phi$, we used an anisotropic eikonal model with several initial points~\cite{Fr14}.
The transmembrane potential $v_m$ can be computed from the eikonal solution by means of a fixed waveform~\cite{Pe17}:
\begin{equation*}
    v_m(\x,t) = R_0 + \frac{R_1 - R_0}{2} \left[ \tanh \left( \frac{2 (t - \phi(\x)) }{\kappa}\right) + 1 \right].
    \label{eq:tanh_waveform}
\end{equation*}
Note that for the depolarization and repolarization constants, we chose $R_1 = 85$ mV, $R_0 = -30$ mV, and for the time-scale $\kappa = 1$.
The model consists of $4$ different regions, each containing different conductivities $\sigma_{i/e}$ in $S/m$ in line with other studies~\cite{Ke10,Mo22}.
The chosen conductivities in 2D are shortly summarized in \Cref{tab:simulation_conductivities}.
The 3D rabbit torso model shown in \Cref{fig:illustration} represents the upper body without the head but includes the arms and several organs.
For further information and the conductivities for each organ of the 3D rabbit model, we refer the reader to~\cite[Tab.~1]{Mo22}.
\begin{table}[htb]
\centering
\begin{scriptsize}
    \begin{tabular}{c|c|c}
        Region & $\sigma_i$ & $\sigma_e$ \\ \hline
        Torso & 0 & 0.22 \\
        Lungs & 0 & 0.03 \\
        Blood & 0 & 0.7 \\
        Myocardium $(\Omega_H)$ & $\left( 0.174, 0.0193 \right)$ & $\left( 0.625, 0.236 \right)$ \\
        \hline
    \end{tabular}
\end{scriptsize}
\caption{Conductivities used for simulating the ground truth activation. 
Multiple values indicate anisotropic values along the fiber and transversal directions. 
All units in $S/m$.}\label{tab:simulation_conductivities}
\end{table}

To simulate the ground truth activation, we utilize the so-called pseudo bidomain equation~\cite{Bi11}, a computationally simpler version of the bidomain model~\cite{Li03}.
In the absence of any boundary conditions, we additionally add a small $\varepsilon > 0$ to guarantee well-posedness:
\begin{equation}
        - \operatorname{div} \left(\sigma(\x) \nabla v(\x, t) \right) + \varepsilon v(\x, t) = \operatorname{div} \left( \sigma_i(\x) v_m (\x, t) \right) \quad (\x, t) \in \Omega \times \T,
        \label{eq:pseudo_bidomain}
\end{equation}
where $\sigma_{i/e}$ are the intra- and extracellular conductivities respectively (note that $\sigma_i = \mathbf{0}$ for all $\x \notin \Omega_H$ and $\sigma = \sigma_i + \sigma_e$ as in~\eqref{eq:E_forward}). 
Here, $v$ and $v_m$ are the extracellular and transmembrane potentials, respectively.

We consider a fixed geometry for both models and all numerical simulations are computed using a finite element method with $\Pf_1$ elements.
The 2D model contains $17,292$ triangle elements and $8,728$ vertices with $210$ nodes and $210$ two-dimensional elements representing the epicardium.
The 3D rabbit dataset consists of $475,574$ tetrahedral elements and $82,997$ vertices with $20,590$ vertices and $41,184$ cells on the epicardium.

\subsection{Results}
The novel space-time total variation-based methods for the regularizer $\Fa$ are subsequently defined as $\TVSTa$.
To highlight the benefit of time regularization, we also compare all results to the spatial total variation $\TVSa$ induced by the regularizer $\Fa$ with $\lambda_t=0$.
All total variation-based methods are computed with lumped mass matrices in contrast to the baseline methods.
To compare the results obtained by the different approaches, we define the frequently used \emph{relative error} (RE) and \emph{Pearson's correlation coefficient} (CC) evaluating the vector of nodal values $\mathbf{u}$ of a discretized finite element function $u_h\in\Vh$, as well as the \emph{$\Vh$ error} for discretized functions taking the geometry of the problem in consideration:
\begin{equation*}
\text{RE}(\mathbf{u}) = \frac{\norm{\mathbf{u}-\mathbf{u}^\mathrm{g}}_{2}}{\norm{\mathbf{u}^\mathrm{g}}_{2}}, \quad
\text{CC}(\mathbf{u}) = \frac{(\mathbf{u}-\overline{\mathbf{u}},\mathbf{u}^\mathrm{g}-\overline{\mathbf{u}}^\mathrm{g})_{2}}{\norm{\mathbf{u}-\overline{\mathbf{u}}}_{2}\norm{\mathbf{u}^\mathrm{g}-\overline{\mathbf{u}}^\mathrm{g}}_{2}},\quad
\Vh^{\mathrm{err}}(u_h)=\norm{u_h-u_h^\mathrm{g}}_{\Vh},
\end{equation*}
where $\mathbf{u}^\mathrm{g}$ represents the vector of nodal values for the ground truth function $u_h^\mathrm{g}$, and $\overline{\mathbf{u}}$ is defined as the mean of $\mathbf{u}\in\R^{N_\Vcal(S+1)}$.
The relative error gain of our method, denoted as $\epsilon^{\text{our}}$, compared to the baseline method, denoted as $\epsilon^{\text{base}}$, is calculated using the formula $\frac{\lvert \epsilon^{\text{our}} - \epsilon^{\text{base}} \rvert}{\epsilon^{\text{base}}}$.
The introduced baseline and total variation-based methods are evaluated in two- and three-dimensional applications.
We remark that solving the inverse problem is performed using the same discretization as in the simulation of the ground truth, which may lead to more optimistic results compared to real-world scenarios.
All computations are performed using CuPy \cite{CUPY} and SciPy \cite{SCIPY} on an NVIDIA A40 GPU and an AMD EPYC 7543 CPU.
All computations are performed in double precision.
The algorithm is terminated if the  $l^\infty$-norm difference of two consecutive primal iterates, i.e., $\norm{\mathbf{u}^{n+1}-\mathbf{u}^{n}}_{\infty}$, is below the threshold value $10^{-3}$.
A logarithmic grid search minimizing the $\Vh$ error has been conducted for all methods to determine the optimal hyperparameter values in the range between $10^{-15}$ and $1$.
Although the total variation-based methods allow for different spatial and temporal regularization penalization schemes, the applications discussed in this paper induce equal space and time weighting in a logarithmic grid search for the unrelated physical dimensions, implying correlation.

Computing the reconstruction of the simulated body surface potential without noise perturbations induces relatively small optimal regularization parameters for all approaches.
To investigate the robustness and ensure numerical stability, we compute reconstructions with component-wise additive white Gaussian noise $\mathbf{n}\sim \mathcal{N}(0,\sigma^2_\mathbf{n})$ on the vector of nodal values of the body surface measurements $\mathbf{z}$.
The noise is applied for different \textit{Signal to Noise Ratio} levels in ${\db}$ as it is common in literature \cite{Bc17}.
For the simulated body surface potential $z_h^{\mathrm{g}}\coloneqq A[u_h^g]$ with vector of nodal values $\mathbf{z}^{\mathrm{g}}$, we compute the noisy measurement function by $\mathbf{z} =\mathbf{z}^{\mathrm{g}} + \mathbf{n}$ with noise level
\begin{equation*}
\mathrm{SNR}_{\db}=20 \log_{10}\frac{\norm{\mathbf{z}}_{2}}{\norm{\mathbf{n}}_{2}}.
\end{equation*}

\subsubsection{Two-Dimensional Model}
Our first test is to reconstruct simulated two-dimensional measurements on the torso for noise levels $70$, $50$, and $20 \db$.
Numerical errors of all methods proposed in this work are evaluated in \Cref{tab:2D_results}.
The space-time reconstructions for $50 \db$ noise are compared to the ground truth simulation in a two-dimensional plot in \Cref{fig:2D_noise_50} with the x-axis being the angle of the circular heart and the y-axis being the temporal dimension.
A comparison of the simulated potential and different reconstructions of the extracellular potential at a randomly chosen vertex of the epicardium can be observed in \Cref{fig:2D_noise_ecg}.
$\To$ and $\TTo$ as well as the total variation-based $\TVSTo$ and $\TVSTt$ are also visualized for $70$ and $20\db$ noise in \Cref{fig:2D_noise_70_20}.
\begin{figure}[htp] \centering{
\begin{tikzpicture}
    \node[inner sep=0pt] (A) at (0,0){\includegraphics[clip, trim=0.2cm 0.1cm 0cm 0.1cm,width=0.98\textwidth]{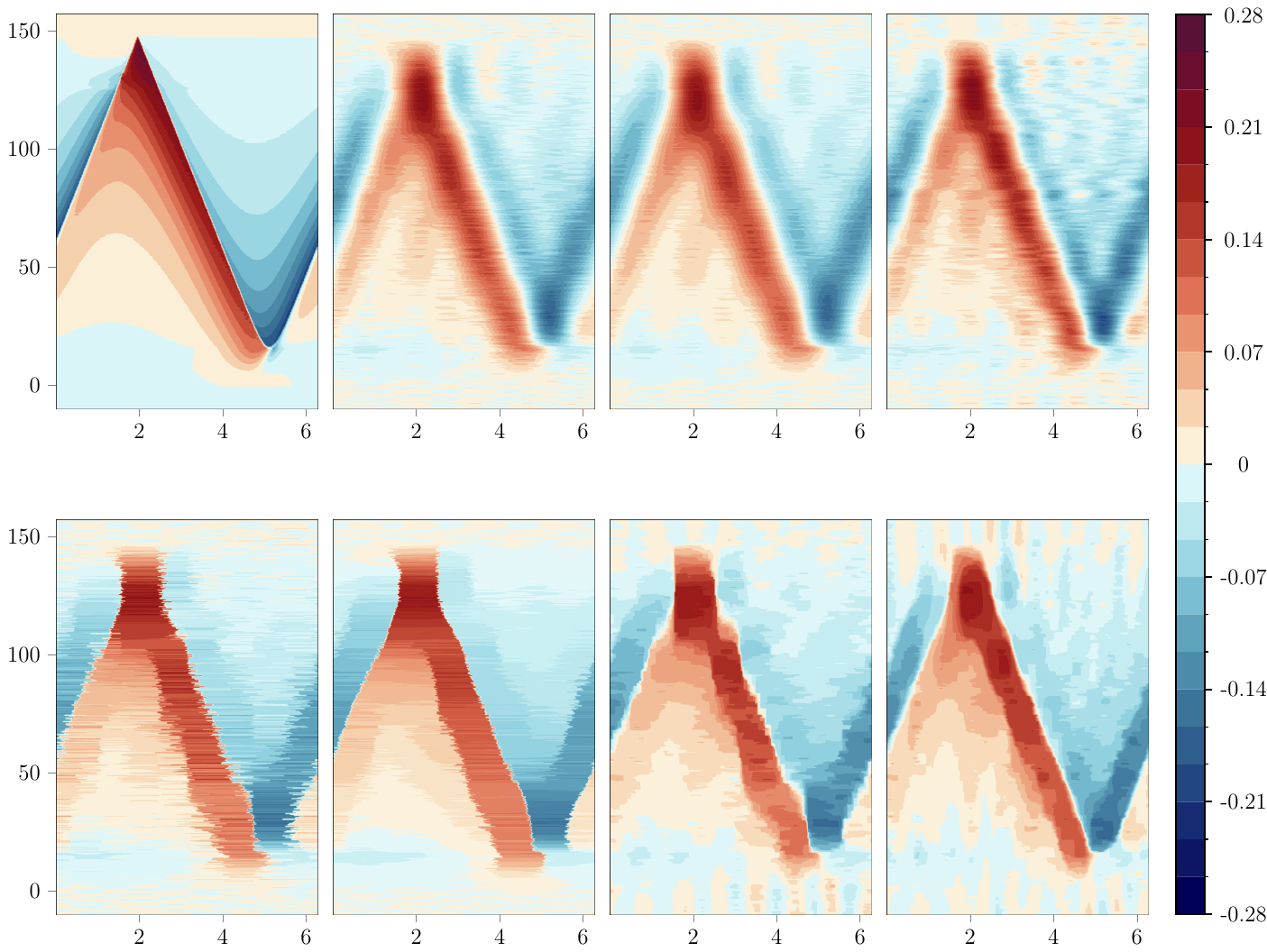}};
    \node[inner sep=0pt, rotate=90] at (-6.5,2.65) 
    {{\tiny Time $t$ [ms]}};
    \node[inner sep=0pt, rotate=90] at (-6.5,-2.5) 
    {{\tiny Time $t$ [ms]}};

    \foreach \x/\y in {0/$\GT$,1/$\Tz$,2/$\To$,3/$\TTo$}
    {\node[inner sep=0pt] at (-4.65+\x*2.8,4.95) 
    {\y};}

    \foreach \x/\y in {0/$\TVSo$,1/$\TVSt$,2/$\TVSTo$,3/$\TVSTt$}
    {\node[inner sep=0pt] at (-4.65+\x*2.8,4.95-5.1) 
    {\y};}

    \foreach \x in {0,...,3}
    {\foreach \y in {0,1}
    {\node[inner sep=0pt] at (-4.65+\x*2.8,0.25-\y*5.15){{\tiny Angle [rad]}};}}
\end{tikzpicture}}
\caption{Two-dimensional heart potential reconstruction displayed as a function over space (Angle [rad]) and time (Time t [ms]) with ground truth function ($\GT$) for $50\db$ noise.}
\label{fig:2D_noise_50}
\end{figure}
\begin{figure}[htp] \centering{
\begin{tikzpicture}
    \node[inner sep=0pt] (A) at (0,0){\includegraphics[clip, trim=0.2cm 0.2cm 0.2cm 0.3cm,width=0.98\textwidth]{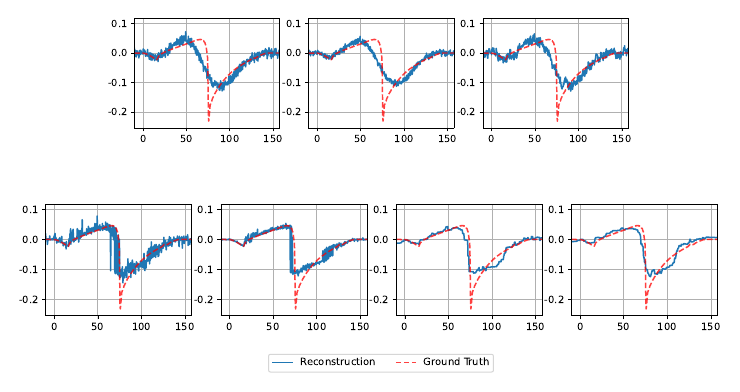}};
\node[inner sep=0pt, rotate=90] at (-4.8,2.3) 
    {{\tiny ECG [mV]}};
    \node[inner sep=0pt, rotate=90] at (-6.4,-1.05) 
    {{\tiny ECG [mV]}};

    \foreach \x/\y in {0/$\Tz$,1/$\To$,2/$\TTo$}
    {\node[inner sep=0pt] at (-2.9+\x*3.1,3.5) 
    {\y};}

    \foreach \x/\y in {0/$\TVSo$,1/$\TVSt$,2/$\TVSTo$,3/$\TVSTt$}
    {\node[inner sep=0pt] at (-4.5+\x*3.15,3.5-3.2) 
    {\y};}

    \foreach \x in {0,...,2}
    {\node[inner sep=0pt] at (-2.9+\x*3.15,0.8){{\tiny Time $t$ [ms]}};}
    
    \foreach \x in {0,...,3}
    {\node[inner sep=0pt] at (-4.5+\x*3.15,0.8-3.3){{\tiny Time $t$ [ms]}};}
\end{tikzpicture}}
\caption{Two-dimensional heart potential (ECG [mV]) with $50\db$ noise reconstructed by different methods over time (Time t [ms]) evaluated on an epicardial node of the corresponding finite element discretization.}
\label{fig:2D_noise_ecg}
\end{figure}
\begin{table}[htbp]
    \centering
    \begin{scriptsize}
    \label{tab:2D_results}
    \begin{tabular}{c|c|c|c|c|c|c|c|c}
     $\db$ & \diagbox[width=5.6em]{Error}{Reg.} & $\Tz$ & $\To$ & $\TTo$ & $\TVSo$& $\TVSt$& $\TVSTo$& $\TVSTt$\\
        \hline
          \multirow{4}*{70}& $\Vh \; \downarrow$ & $5.435$ & $5.454$ & $5.181$ & $5.42$ & $5.005$ & $\mathit{4.542}$ & $\mathbf{4.017}$\\
          & RE $\downarrow$ & $0.442$ & $0.443$ & $0.421$ & $0.441$ & $0.407$ & $\mathit{0.369}$ & $\mathbf{0.327}$\\
         & CC $\uparrow$ & $0.897$ & $0.896$ & $0.907$ & $0.899$ & $0.914$ & $\mathit{0.929}$ & $\mathbf{0.945}$\\
         \cline{2-9}
         & $\lambda_\gamma,\lambda_t$ & 1e-10 & 1e-8 & 1e-9,1e-9 & 1e-10& 1e-10 & 1e-10,1e-10 & 1e-11,1e-11\\
        \hline 
         \multirow{4}*{50} & $\Vh \; \downarrow$ & $6.042$ & $6.094$ & $5.695$ & $6.447$ & $5.916$ & $\mathit{5.325}$ & $\mathbf{4.454}$\\
         & RE $\downarrow$ & $0.491$ & $0.495$ & $0.463$ & $0.524$ & $0.481$ & $\mathit{0.433}$ & $\mathbf{0.362}$\\
         & CC $\uparrow$ & $0.871$ & $0.869$ & $0.886$ & $0.854$ & $0.877$ & $\mathit{0.901}$ & $\mathbf{0.932}$\\
         \cline{2-9}
         & $\lambda_\gamma,\lambda_t$ & 1e-8 & 1e-6 & 1e-8,1e-8 & 1e-8& 1e-8 & 1e-9,1e-9 & 1e-10,1e-10\\
        \hline 
        \multirow{4}*{20} & $\Vh \; \downarrow$ & $7.905$ & $7.827$ & $7.134$ & $8.854$ & $8.142$ & $\mathit{7.039}$ & $\mathbf{6.238}$\\
         & RE $\downarrow$ & $0.642$ & $0.636$ & $0.58$ & $0.72$ & $0.662$ & $\mathit{0.572}$ & $\mathbf{0.507}$\\
         & CC $\uparrow$ & $0.768$ & $0.772$ & $0.815$ & $0.702$ & $0.75$ & $\mathit{0.82}$ & $\mathbf{0.862}$\\
         \cline{2-9}
         & $\lambda_\gamma,\lambda_t$ & 1e-5 & 1e-3 & 1e-5,1e-5 & 1e-5& 1e-5 & 1e-6,1e-6 & 1e-7,1e-7\\
    \end{tabular}
        \end{scriptsize}
    \caption{$\Vh$ error, relative error (RE), correlation coefficient (CC), and regularization parameters $(\lambda_\gamma,\lambda_t)$ for two-dimensional heart potential reconstruction of different regularization methods with respect to the ground truth function.}
\end{table}

All the regularizing methods reconstruct a similar pattern to the ground truth function for the almost noiseless scenario ($70 \db$).
From a numerical perspective, we observe that $\To$ generates the worst reconstructions for the two-dimensional simulation, but adding prior information of the previous timestep and computing $\TTo$ improves the results and outperforms $\Tz$. 
With increasing noise levels, the baseline Tikhonov methods do not differ visually.
For SNR levels $50\db$ and $20\db$, $\TVSa$ improves the result both qualitatively in the space-time domain, as well as numerically for all applied $\alpha$ since the plot does not appear to be as blurred and contains sharper edges but is still perturbed by noise.
Numerically and visually, the $\TVSt$ method consistently generates superior results to $\TVSo$.
In summary, the best results can be achieved by $\TVSTa$ because the smoothing in time enhances the transitions even more.
Especially, $L^{2,1}$ regularization smoothes irregularities in the spikes of the space-time plot to sharper and more accurate reconstructions.
The benefit of adding time regularization for total variation is particularly visible in the extracellular plot \Cref{fig:2D_noise_ecg}.
However, for increasing regularization parameters $\lambda$, needed for higher noise levels the space-time total variation reconstruction exhibits smoothing effects at the spikes of the function.
The small regularization parameters for small noise levels indicate the highly ill-posedness of the problem for which little smoothing of the function is needed, but a low amount of regularization applied to the result leads to the desired properties of the potential on the heart.
Comparing the best-performing $\TVSTt$ method for $50\db$ noise to the best-performing Tikhonov with additional time regularization $\TTo$, the relative improvement in error values is $21.8\%$ in the $\Vh$ error, $21.8\%$ for the RE, and $5.2\%$ for the CC.
The computing time amounts to $0.0018$s for $\Tz$, $0.0023$s for $\To$, $0.0593$s for $\TTo$, at most $4.23$s for $\TVSo$ and $\TVSTo$, and at most $6.15$s for $\TVSt$ and $\TVSTt$.
\begin{figure}[htp] \centering{
\begin{tikzpicture}
\node[inner sep=0pt] (A) at (0,0){\includegraphics[clip, trim=0.2cm 0.1cm 0cm 0.1cm,width=0.93\textwidth]{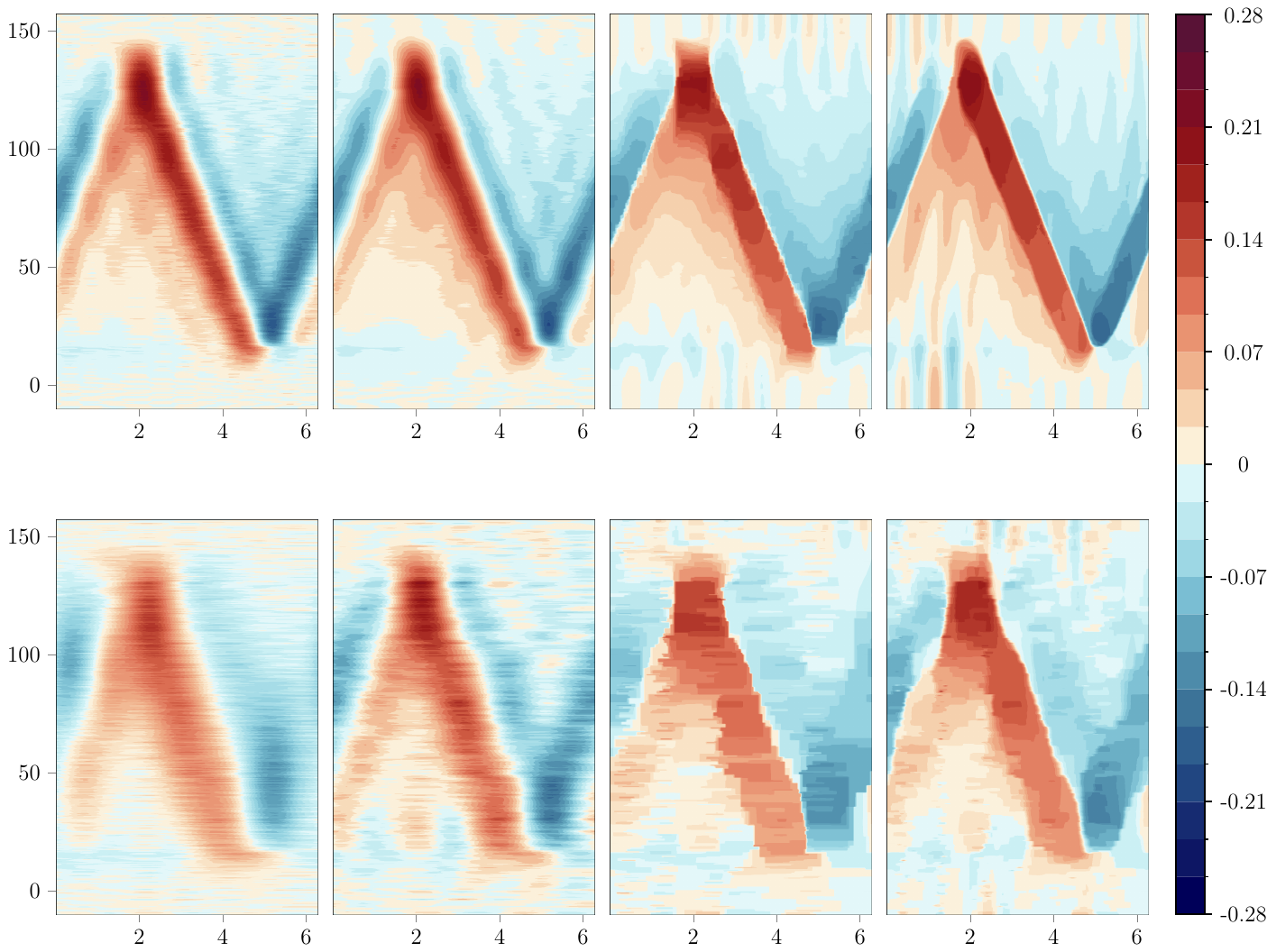}};
    \node[inner sep=0pt, rotate=90] at (-6.2,2.65) 
    {{\tiny Time $t$ [ms]}};
    \node[inner sep=0pt, rotate=90] at (-6.2,-2.25) 
    {{\tiny Time $t$ [ms]}};

    \foreach \x/\y in {0/$\Tz$,1/$\TTo$,2/$\TVSTo$,3/$\TVSTt$}
    {\node[inner sep=0pt] at (-4.5+\x*2.68,4.75) 
    {\y};}

    \foreach \x in {0,...,3}
    {\foreach \y in {0,1}
    {\node[inner sep=0pt] at (-4.5+\x*2.68,0.25-\y*4.9){{\tiny Angle [rad]}};}}
    \foreach \a/\b in {0/70,1/20}{
    \node[inner sep=0pt, rotate=90] at (-6.8,2.65-4.9*\a) 
        {$\b \db$};}
    \draw[line width=0.5mm] (-6.2,-0.1) -- (5,-0.1);
\end{tikzpicture}}
\caption{Two-dimensional heart potential reconstruction displayed as a function over space (Angle [rad]) and time (Time t [ms]) with ground truth function ($\GT$) for $70$ and $20\db$ noise.}
\label{fig:2D_noise_70_20}
\end{figure}

Since all computations are based on simulated observations and a ground truth function, the regularization parameters can be selected optimally. 
To justify the selection of these parameters, we conduct a study where the observation data is randomly split into subsets of $\{6.7\%, 20\%, 33.3\%\}$ corresponding to observations $\widetilde{N}_\Sigma\in\{1, 3, 5\}$ for generalized cross-validation (GCV)~\cite{Ba16,Go79}, with the remaining data used to compute the reconstruction of the inverse problem for $\TVSTt$ to determine the regularization parameters.
The validation error of the reconstruction by a limited number of observations $\mathbf{u}^{\widetilde{N}_\Sigma}$ is derived by the discretized data fidelity term $\mathbf{G}$
\begin{equation*}
\frac{1}{2 \widetilde{N}_\Sigma} \left(\A\mathbf{u}^{\widetilde{N}_\Sigma}-\mathbf{z}\right)^\top \widetilde{\D}^{\Sigma}\left(\A\mathbf{u}^{\widetilde{N}_\Sigma}-\mathbf{z}\right).
\end{equation*}
The optimally chosen parameters are uniform in both space and time, resulting in the GCV selection being also performed uniformly across both dimensions.
Evaluation of the generalized cross-validation in 2D with $50$dB noise is visualized in \Cref{fig:gcv}.
\begin{figure}[htp] \centering{
\begin{tikzpicture}
\node[inner sep=0pt] (Z) at (0,0){\includegraphics[clip, trim=0.1cm 0cm 0cm 0cm,width=0.93\textwidth]{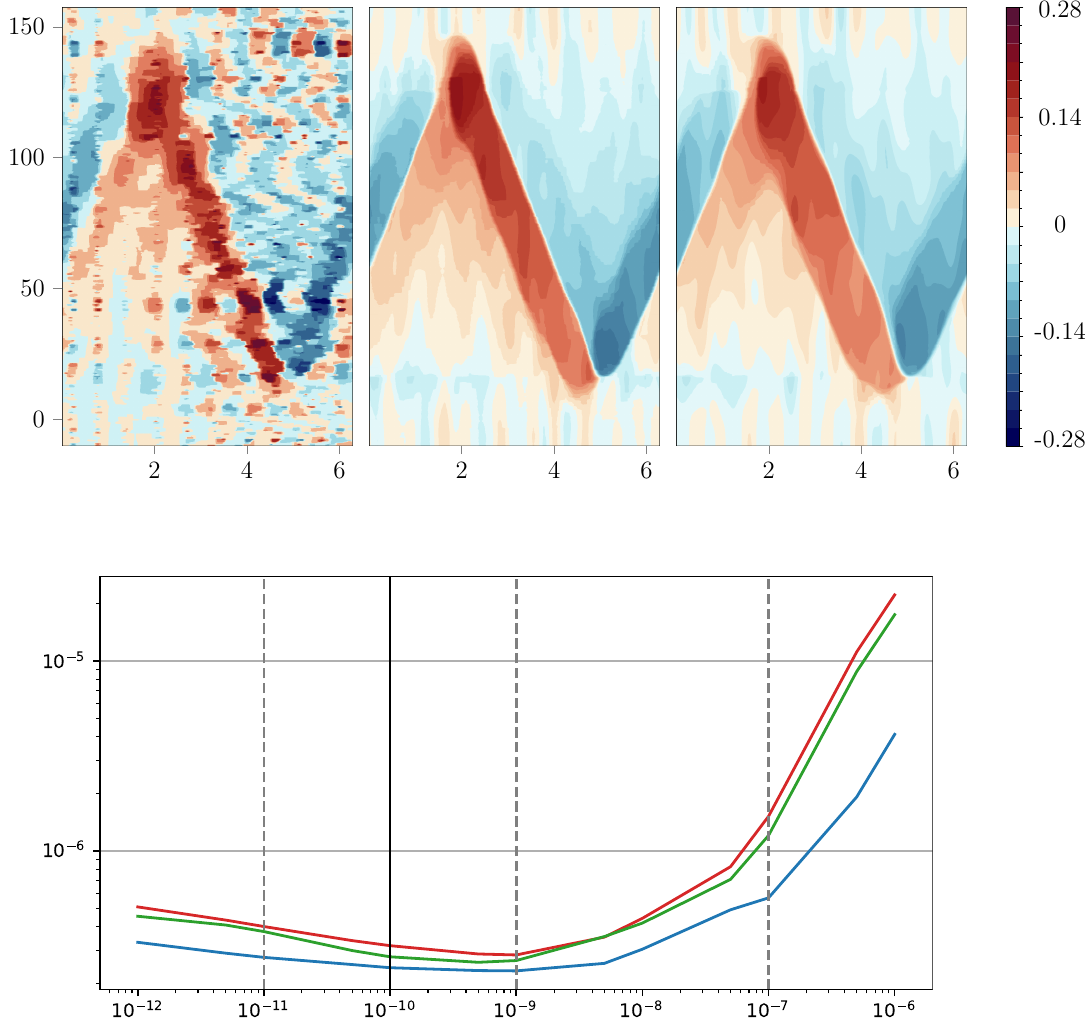}};
    \node[inner sep=0pt, rotate=90] at (-6.2,3.2) 
    {{\tiny Time $t$ [ms]}};
    
    \foreach \x in {0,...,2}
    {\node[inner sep=0pt] at (-3.8+\x*3.4,0.22){{\tiny Angle [rad]}};}

    \foreach \x/\y in {0/$\lambda_\gamma=\lambda_t=$1e-11,1/$\lambda_\gamma=\lambda_t=$1e-9,2/$\lambda_\gamma=\lambda_t=$1e-7}
    {\node[inner sep=0pt] at (-3.8+\x*3.4,5.8){{\small \y}};}

    \node[inner sep=0pt, rotate=90] at (-6.1,-3) 
    {{\small Validation Error}};
    \node[inner sep=0pt] at (-0.5,-6) 
    {{\small $\lambda_\gamma,\lambda_t$}};
    \node[inner sep=0pt] at (-0.5,-0.3) 
    {{Generalized Cross Validation}};

    \draw  (4.4,-1.2) rectangle (6.7,-4.95);
    \node[](S) at (4.5,-1.5){};
    \node[] (A) [right=0.4cm of S]{$6.7\%$};
    \node[] (B) [below right=0.4cm and 0.4cm of S]{$20\%$};
    \node[] (C)  [below right=0.4cm*3 and 0.4cm of S]{$33.3\%$};
    \node[] (D) [below right=0.4cm*5 and 0.4cm of S]{Optimal};
    \node[] (E) [below right=0.4cm*7 and 0.4cm of S]{Examples};
    \node[](S1) [left=0.4cm of B]{};
    \node[](S2) [left=0.4cm of C]{};
    \node[](S3) [left=0.4cm of D]{};
    \node[](S4) [left=0.4cm of E]{};

    \draw [draw=green] (S)[right] -- (A)[left];
    \draw [draw=red] (S1)[right] -- (B)[left];
    \draw [draw=blue] (S2)[right] -- (C)[left];
    \draw [draw=black] (S3)[right] -- (D)[left];
    \draw [draw=gray,dashed] (S4)[right] -- (E)[left];
    
\end{tikzpicture}}
\caption{Generalized cross-validation with $50$dB noise for different percentages of observations $\{6.7\%, 20\%, 33.3\%\}$ and varying regularization parameters (uniform in space and time). The best reconstruction according to GCV, as well as an under- and an over-regularized reconstruction, are visualized.}
\label{fig:gcv}
\end{figure}
The results of the generalized cross-validation reveal that the same regularization parameters are selected across all validation data percentages, although they are slightly larger than the optimal values chosen in this study. Identifying the best parameter choices without ground truth for comparison is a non-trivial task and one that is still subject to various studies. Nonetheless, the close alignment of the GCV-selected regularization parameter with the optimal one serves as proof of the effectiveness of this approach for parameter selection.

\subsubsection{Three-Dimensional Rabbit Model}
In this part, we discuss the results for reconstructing the electrical activation on the rabbit dataset for $70$, $50$, and $20\db$ SNR levels.
All levels are evaluated for different errors in \Cref{tab:3D_results}.
Reconstructions for $50\db$ SNR are visualized in \Cref{fig:3D_noise_50} with extracellular potentials over time at a random epicardial node shown in \Cref{fig:3D_noise_ecg}.

\begin{figure}[htp] \centering{
\begin{tikzpicture}
    \node[inner sep=0pt] (A) at (0,0){\includegraphics[clip, trim=0.5cm 0.2cm 0cm 0.2cm,width=0.98\textwidth]{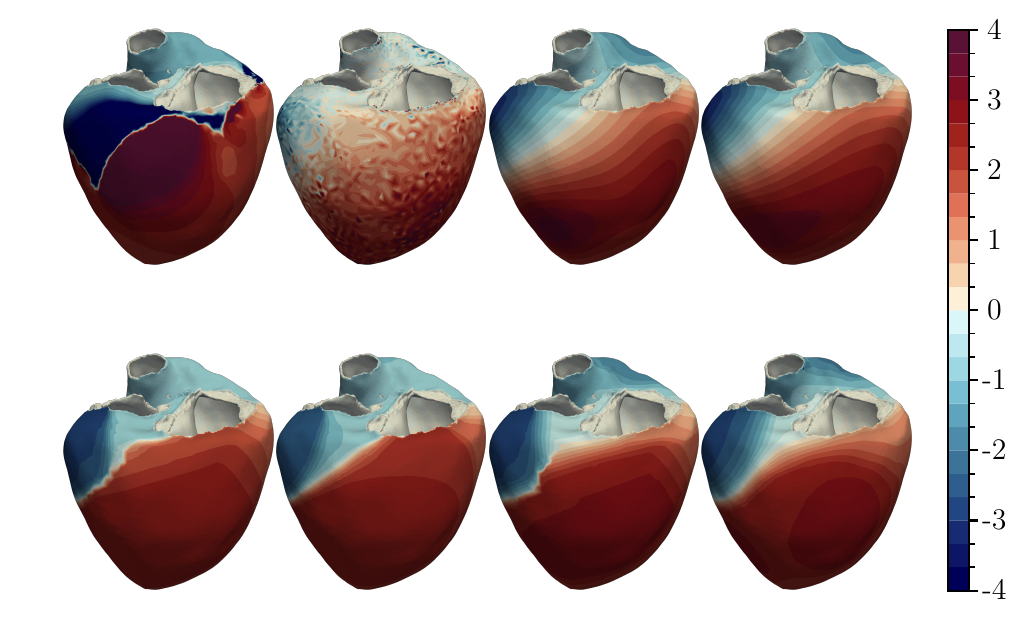}};
\foreach \x/\y in {0/$\GT$,1/$\Tz$,2/$\To$,3/$\TTo$}
    {\node[inner sep=0pt] at (-4.6+\x*2.75,3.9) 
    {\y};}
\foreach \x/\y in {0/$\TVSo$,1/$\TVSt$,2/$\TVSTo$,3/$\TVSTt$}
    {\node[inner sep=0pt] at (-4.6+\x*2.75,3.9-4.1) 
    {\y};}
\node[inner sep=0pt, rotate=90] at (6.4,0) 
    {extracellular potential (capped)};
\end{tikzpicture}}
\caption{Epicardial potential displayed at time $t=52.5ms$ for the reference function ($\GT$) and different reconstruction methods for the front of the rabbit heart with $50\db$ Gaussian noise on the BSPM.}
\label{fig:3D_noise_50}
\end{figure}

\begin{figure}[htp] \centering{
\begin{tikzpicture}
    \node[inner sep=0pt] (A) at (0,0){\includegraphics[clip, trim=0.2cm 0.2cm 0.2cm 0.3cm,width=0.98\textwidth]{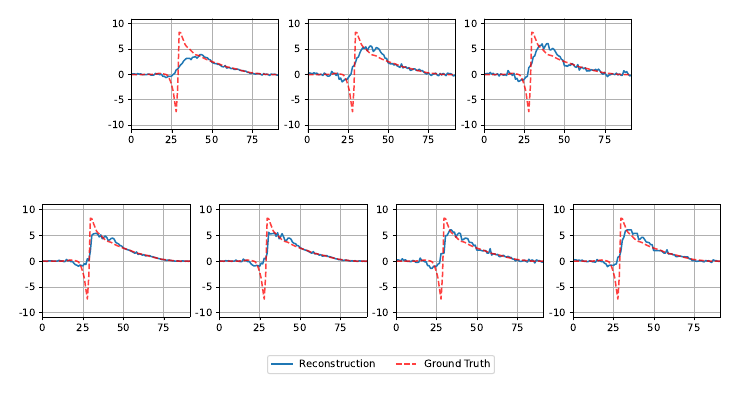}};
\node[inner sep=0pt, rotate=90] at (-4.8,2.3) 
    {{\tiny ECG [mV]}};
    \node[inner sep=0pt, rotate=90] at (-6.4,-1.05) 
    {{\tiny ECG [mV]}};

    \foreach \x/\y in {0/$\Tz$,1/$\To$,2/$\TTo$}
    {\node[inner sep=0pt] at (-2.9+\x*3.1,3.5) 
    {\y};}

    \foreach \x/\y in {0/$\TVSo$,1/$\TVSt$,2/$\TVSTo$,3/$\TVSTt$}
    {\node[inner sep=0pt] at (-4.5+\x*3.15,3.5-3.2) 
    {\y};}

    \foreach \x in {0,...,2}
    {\node[inner sep=0pt] at (-2.9+\x*3.15,0.8){{\tiny Time $t$ [ms]}};}
    
    \foreach \x in {0,...,3}
    {\node[inner sep=0pt] at (-4.5+\x*3.15,0.8-3.3){{\tiny Time $t$ [ms]}};}
\end{tikzpicture}}
\caption{Three-dimensional heart potential (ECG [mV]) with $50\db$ Gaussian noise added on the BSPM reconstructed by different methods over time (Time t [ms]) evaluated on an epicardial node of the corresponding finite element discretization.}
\label{fig:3D_noise_ecg}
\end{figure}

\begin{table}[htbp]
    \centering
    \label{tab:3D_results}
    \begin{scriptsize}
    \begin{tabular}{c|c|c|c|c|c|c|c|c}
     $\db$ & \diagbox[width=5.6em]{Error}{Reg.} & $\Tz$ & $\To$ & $\TTo$ & $\TVSo$& $\TVSt$& $\TVSTo$& $\TVSTt$\\
        \hline
          \multirow{4}*{70}& $\Vh \; \downarrow$ & $932.85$ & $813.21$ & $808.83$ & $795.78$ & $\mathit{791.19}$ & $792.03$ & $\mathbf{786.08}$\\
          & RE $\downarrow$ & $0.913$ & $0.77$ & $0.765$ & $0.754$ & $\mathit{0.75}$ & $0.752$ & $\mathbf{0.749}$\\
          & CC $\uparrow$ & $0.446$ & $0.638$ & $0.644$ & $0.656$ & $\mathit{0.66}2$ & $\mathit{0.66}$ & $\mathbf{0.664}$\\
         \cline{2-9}
         & $\lambda_\gamma,\lambda_t$ & 1e-10 & 1e-10 & 1e-10,1e-11 & 1e-10& 1e-10 & 1e-11,1e-11 & 1e-11,1e-11\\
        \hline 
        \multirow{4}*{50} & $\Vh \; \downarrow$ & $937.62$ & $827.98$ & $823.38$ & $819.54$ & $818.64$ & $\mathit{808.21}$ & $\mathbf{802}$\\
         & RE $\downarrow$ & $0.91$ & $0.783$ & $0.778$ & $0.773$ & $0.772$ & $\mathit{0.765}$ & $\mathbf{0.762}$\\
         & CC $\uparrow$ & $0.441$ & $0.622$ & $0.629$ & $0.634$ & $0.636$ & $\mathit{0.644}$ & $\mathbf{0.649}$\\
         \cline{2-9}
         & $\lambda_\gamma,\lambda_t$ & 1e-8 & 1e-8 & 1e-8,1e-9 & 1e-8& 1e-8 & 1e-9,1e-9 & 1e-9,1e-9\\
        \hline 
        \multirow{4}*{20} & $\Vh \; \downarrow$ & $961.43$ & $869.66$ & $857.83$ & $875.42$ & $869.83$ & $\mathit{854.46}$ & $\mathbf{849.65}$\\
         & RE $\downarrow$ & $0.938$ & $0.823$ & $\mathit{0.808}$ & $0.827$ & $0.821$ & $\mathbf{0.807}$ & $0.82$\\
         & CC $\uparrow$ & $0.387$ & $0.568$ & $\mathit{0.589}$ & $0.563$ & $0.571$ & $\mathbf{0.59}$ & $0.573$\\
         \cline{2-9}
         & $\lambda_\gamma,\lambda_t$ & 1e-6 & 1e-5 & 1e-5,1e-6 & 1e-6& 1e-5 & 1e-6,1e-6 & 1e-6,1e-6\\
    \end{tabular}
        \end{scriptsize}
    \caption{$\Vh$ error, relative error (RE), correlation coefficient (CC), and regularization parameters $(\lambda_\gamma,\lambda_t)$ for three-dimensional heart potential reconstruction of different regularization methods with respect to the ground truth function.}
\end{table}
The visual comparison in \Cref{fig:3D_noise_50} displays a massive difference between $\Tz$ and the remaining methods shown in the error comparison with $\Tz$ being the worst performing method.
Due to the highly ill-posedness of the problem, none of the methods reconstructs the epicardial potential without obvious visual and numerical errors.
Nevertheless, the improvement gained by total variation and added time regularization can be observed.
$L^{2,1}$ norm regularization improves the results regarding the $\Vh$ error with respect to $L^1$ norm regularization because of smoother transitions.
For increasing noise applied to the measurements, we observe less spatial curvature in \Cref{fig:3D_noise_70_20} in comparison to the ground truth due to the larger regularization parameters smoothing the epicardial potential.
In the $50\db$ case, the best performing $\TVSTt$ method achieves a relative gain to $\TTo$ of $2.6\%$ in the $\Vh$ error, $2\%$ in RE, and $3.2\%$ in CC.
The computing time amounts to $0.103$min for $\Tz,$ $0.115$min for $\To,$ $0.196$min for $\TTo$, at most $132.17$min for $\TVSo$ and $\TVSTo$, and at most $211.05$min for $\TVSt$ and $\TVSTt$.
For the $\Vh$ error, $\TVSTt$ produces the best reconstructions across all noise levels evaluated in this study.
Nevertheless, the $L^1$-norm space-time total variation method can be superior for different errors since we optimize only regarding the $\Vh$ error, and the other metrics are discrete vectors of nodal values-based metrics in contrast to the finite element discretized $\Vh$ metric.

For both 2D and 3D applications, introducing only a small amount of thorax measurement noise increases the optimal regularization parameter, which is expected because a small perturbation in the data corresponds to a large perturbation in the reconstruction.
Joint spatiotemporal smoothing of the function results in improved results due to the temporal continuity of the epicardium potential, information that is not exploited in spatial regularization methods.
Comparing the two- and three-dimensional results displays huge differences in the performance of $\Tz$, which is caused by the complexity of the meshes and the corresponding mass matrices.
Since the two-dimensional synthetic model contains a circular epicardium divided uniformly into affine finite elements, the impact of the mass matrix for every node is equal.
The three-dimensional rabbit heart geometry is much more complex and admits uneven smoothing on each epicardium node, resulting in a noise-perturbed reconstruction.
We can also observe less benefit for the temporal regularization, comparing the two- and three-dimensional results due to the highly ill-posedness of the geometrically more complex rabbit model.
The small regularization parameters for small noise levels indicate the necessity of a better-suited regularizer.

\begin{figure}[htp] \centering{
\begin{tikzpicture}
    \node[inner sep=0pt] (A) at (0,0){\includegraphics[clip, trim=0.5cm 0.2cm 0cm 0.2cm,width=0.93\textwidth]{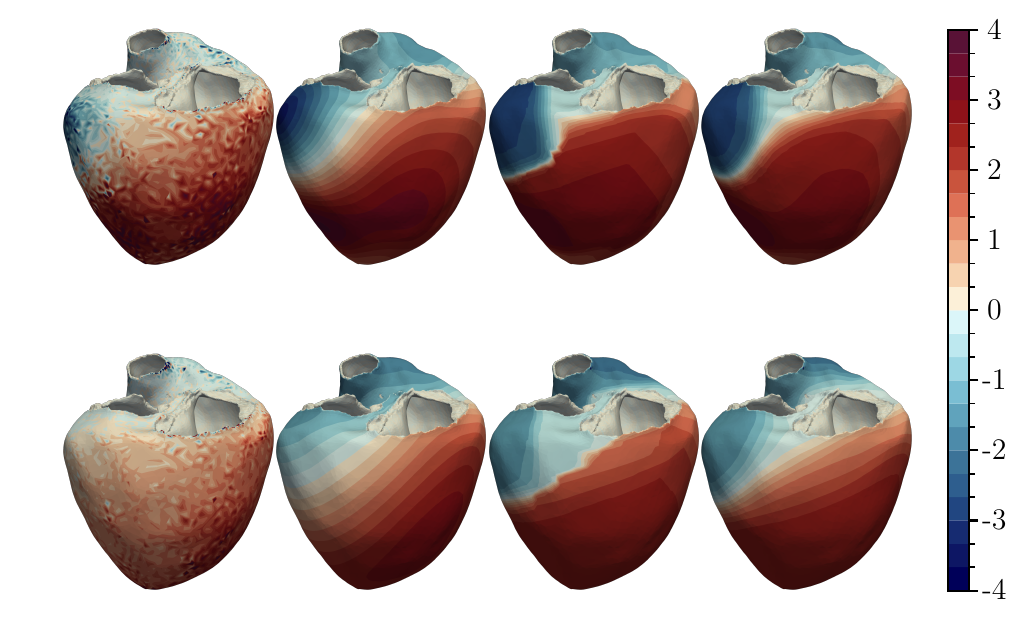}};
\foreach \x/\y in {0/$\Tz$,1/$\TTo$,2/$\TVSTo$,3/$\TVSTt$}
    {\node[inner sep=0pt] at (-4.45+\x*2.65,3.7) 
    {\y};}
\node[inner sep=0pt, rotate=90] at (6.1,0) 
    {extracellular potential (capped)};
\foreach \a/\b in {0/70,1/20}{
    \node[inner sep=0pt, rotate=90] at (-6.3,2.3-4.4*\a) 
        {$\b \db$};}
    \draw[line width=0.5mm] (-5.7,0.05) -- (5,0.05);
\end{tikzpicture}}
\caption{Three-dimensional heart potential reconstruction at time $t=52.5ms$ with ground truth function ($\GT$) for $70$ and $20\db$ white Gaussian noise.}
\label{fig:3D_noise_70_20}
\end{figure}

\subsection{Increasing Number of Electrodes}
Since the reconstruction of the epicardial potential will improve with more information by an increasing number of body surface measurements, we evaluate the proposed methods in this scenario to demonstrate the gain of the total variation-based regularization for a preferable body surface signal and epicardial node ratio.
The computations are performed with randomly chosen torso electrodes excluding the arms of the rabbit. 
The choice of regularization parameter is according to the optimal results with $32$ electrodes.
Even though the optimal hyperparameter values differ slightly for different numbers of electrodes, we chose the same parameter for all methods to guarantee comparability.
Because the optimal time regularization parameter for total variation decreases by consistent space regularization parameter, we allow for anisotropy.
\Cref{fig:3D_electrode_increase} displays the $\Vh$ error, RE, and CC for all reconstructions by an increasing number of electrodes.
A visual comparison with $500$ body surface measurements is displayed in  \Cref{fig:500_electrodes}.
\begin{figure}[htp] \centering{
\begin{tikzpicture}
    \node[inner sep=0pt] (A) at (0,0){\includegraphics[clip, trim=0cm 0.2cm 0cm 0cm,width=0.96\textwidth]{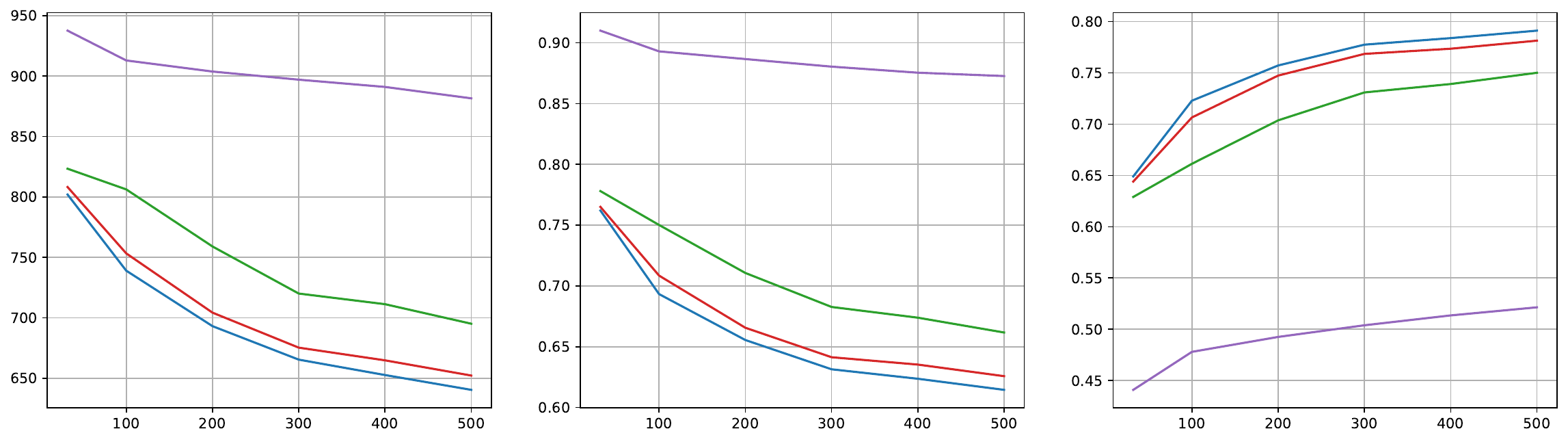}};
\node[inner sep=0pt, rotate=90] at (-6.3,0) {\small $\Vh$ Error};
\node[inner sep=0pt, rotate=90] at (-2.1,0) {\small RE};
\node[inner sep=0pt, rotate=90] at (2.125,0) {\small CC};

\node[inner sep=0pt] at (-4,-2) {\small Electrodes};
\node[inner sep=0pt] at (0.3,-2) {\small Electrodes};
\node[inner sep=0pt] at (4.5,-2) {\small Electrodes};

\draw  (-4.1,-2.35) rectangle (3.4,-2.85);
\node[](S) at (-4,-2.6){};
\node[] (A) [right=0.4cm of S]{$\Tz$};
\node[] (B) [right=0.4cm of A]{$\TTo$};
\node[] (C) at (0.7,-2.55) {$\TVSTo$};
\node[] (C2) [right=0.4cm of B]{};
\node[] (C3) [right=1.8cm of B]{};
\node[] (D) [right=0.4cm of C]{$\TVSTt$};
\node[] (D2) [right=0.4cm of C3]{};
\draw [draw=purple] (S)[right] -- (A)[left];
\draw [draw=green] (A)[right] -- (B)[left];
\draw [draw=red] (B)[right] -- (C2)[left];
\draw [draw=blue] (C3)[right] -- (D2)[left];
\end{tikzpicture}}
\caption{$\Vh$ error, RE, and CC for different rabbit heart reconstructions with $50\db$ Gaussian noise evaluated for increasing body surface measurements. The chosen regularization parameters are $\lambda_\gamma=1e^{-8}$ for $\Tz$, $\lambda_\gamma,\lambda_t=1e^{-9},(1e^{-9};1e^{-10})$ for $\TTo$, and $\lambda_\gamma,\lambda_t=1e^{-9},(1e^{-9};1e^{-10})$ for $\TVSTa$ with $\alpha\in\{1,2\}$.}
\label{fig:3D_electrode_increase}
\end{figure}

\begin{figure}[htp] \centering{
\begin{tikzpicture}
   \node[inner sep=0pt] (A) at (0,0){\includegraphics[clip, trim=0.5cm 0.2cm 0cm 0.2cm,width=0.973\textwidth]{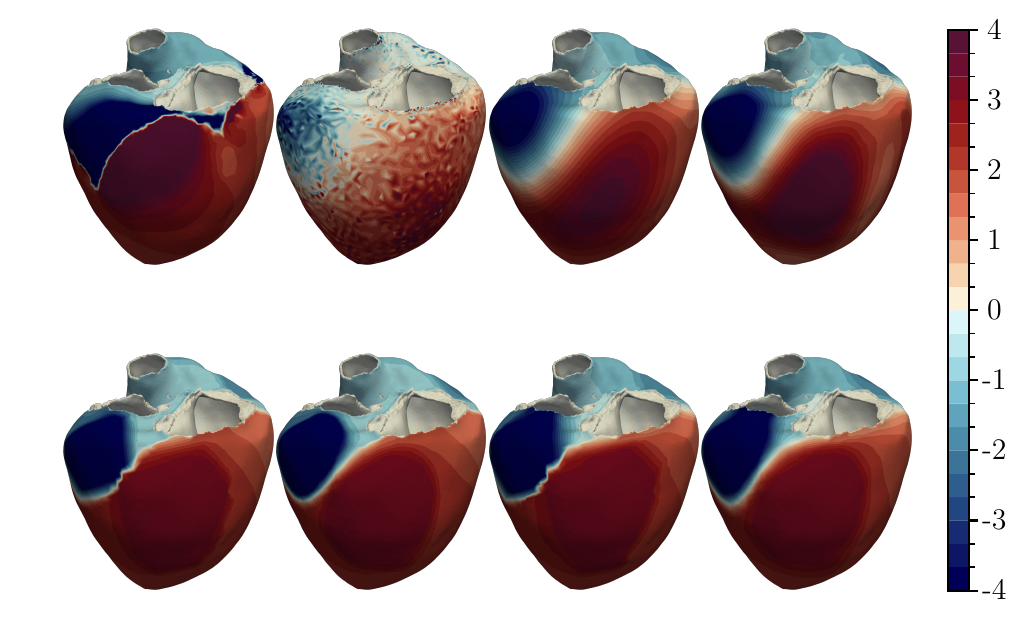}};
\foreach \x/\y in {0/$\GT$,1/$\Tz$,2/$\To$,3/$\TTo$}
    {\node[inner sep=0pt] at (-4.5+\x*2.7,3.9) 
    {\y};}
\foreach \x/\y in {0/$\TVSo$,1/$\TVSt$,2/$\TVSTo$,3/$\TVSTt$}
    {\node[inner sep=0pt] at (-4.5+\x*2.7,4-4.3) 
    {\y};}
\node[inner sep=0pt, rotate=90] at (6.4,0) 
    {extracellular potential (capped)};
\end{tikzpicture}}
\caption{Rabbit database epicardial potential at time $t=52.5ms$ with $500$ body surface measurements for the reference function ($\GT$) and different reconstruction methods with $50\db$ Gaussian noise.}
\label{fig:500_electrodes}
\end{figure}
The plot of decreasing $\Vh$ error and RE and increasing CC for all shown reconstruction methods confirms the assumption that the regularization proposed in this paper leads to improvements by an even larger margin regarding Tikhonov methods if more measurement information is obtained.
Furthermore, in the visual comparison of all algorithms, the sharp edges enforced by total-variation regularization are visible for all $\TVSa$ and $\TVSTa$ methods.
$\TVSTt$ reconstructs the epicardial potential best for all body surface measurement numbers.
The relative gain with $500$ body surface measurements of the best performing $\TVSTt$ method in comparison to $\TTo$ is $9.5\%$ for the $\Vh$ error, $6.4\%$ for the RE, and $8.5\%$ for CC.

\section{Conclusions}
This work introduced novel total variation-based regularization methods in a finite element setting to reconstruct the epicardial potential from body surface measurements.
After stating the mathematical model of the spatiotemporal inverse problem in ECGI, we introduced different discretizations to improve total variation-based regularization.
The resulting methods enhance reconstructions compared to state-of-the-art Tikhonov regularization since the regularizer captures the characteristics of the heart potential function more effectively.
We tested our approaches in a two- and three-dimensional database with simulated epicardial potential and showed their benefits of joint spatiotemporal regularization.

The non-linear optimization problem for total variation-based regularization is computationally more demanding due to the non-differentiability of the regularizer near zero and the high-dimensional dual variable in the iterative minimization algorithm proposed in this paper.
However, the improvements in accuracy achieved by total variation-based reconstruction and dependence on the resolution of the problem justify the computational complexity.

Still, the clinical application demands much more improvement regarding reconstructing the inverse problem in electrocardiographic imaging.
In conclusion, the spatiotemporal total variation-based approach is limited by the computational complexity of high-dimensional datasets.
Future work will further improve the regularization of the inverse problem and focus on real human heart geometry and non-synthetic models.

\section*{Acknowledgements}
Manuel Haas and Thomas Grandits share the first authorship.


\bibliographystyle{siamplain}
\bibliography{references.bib}

\begin{thebibliography}{10}

\bibitem{Um09}
{\sc {\"U}.~Ayd{\i}n and Y.~Serina{\u{g}}ao{\u{g}}lu}, {\em Comparison of
  bayesian map estimation and kalman filter methods in the solution of
  spatio-temporal inverse ecg problem}, in World Congress on Medical Physics
  and Biomedical Engineering, September 7 - 12, 2009, Munich, Germany,
  O.~D{\"o}ssel and W.~C. Schlegel, eds., Berlin, Heidelberg, 2009, Springer
  Berlin Heidelberg, pp.~715--718.

\bibitem{Ba07}
{\sc X.~Bai, V.~L. Towle, E.~J. He, and B.~He}, {\em Evaluation of cortical
  current density imaging methods using intracranial electrocorticograms and
  functional mri}, Neuroimage, 35 (2007), pp.~598--608.

\bibitem{Ba16}
{\sc J.~P. Barnes and P.~R. Johnston}, {\em Application of robust generalised
  cross-validation to the inverse problem of electrocardiology}, Computers in
  biology and medicine, 69 (2016), pp.~213--225.

\bibitem{Be07}
{\sc F.~B. Belgacem}, {\em Why is the {C}auchy problem severely ill-posed?},
  Inverse Problems, 23 (2007), pp.~823--836,
  \url{https://doi.org/10.1088/0266-5611/23/2/020},
  \url{https://doi.org/10.1088/0266-5611/23/2/020}.

\bibitem{Bi11}
{\sc M.~J. Bishop and G.~Plank}, {\em Bidomain {ECG} {Simulations} {Using} an
  {Augmented} {Monodomain} {Model} for the {Cardiac} {Source}}, IEEE
  transactions on bio-medical engineering, 58 (2011),
  p.~10.1109/TBME.2011.2148718,
  \url{https://doi.org/10.1109/TBME.2011.2148718},
  \url{https://www.ncbi.nlm.nih.gov/pmc/articles/PMC3378475/} (accessed
  2022-07-19).

\bibitem{Bo23}
{\sc O.~Bouhamama, M.~Potse, L.~Bear, and L.~Weynans}, {\em A patchwork method
  to improve the performance of current methods for solving the inverse problem
  of electrocardiography}, IEEE Transactions on Biomedical Engineering, 70
  (2023), pp.~55--66, \url{https://doi.org/10.1109/TBME.2022.3184473}.

\bibitem{Bo10}
{\sc A.~C. Bovik}, {\em Handbook of image and video processing}, Academic
  press, 2010.

\bibitem{Bo04}
{\sc S.~Boyd and L.~Vandenberghe}, {\em Convex optimization}, Cambridge
  University Press, Cambridge, 2004,
  \url{https://doi.org/10.1017/CBO9780511804441},
  \url{https://doi.org/10.1017/CBO9780511804441}.

\bibitem{Br08}
{\sc S.~C. Brenner and L.~R. Scott}, {\em The mathematical theory of finite
  element methods}, vol.~15 of Texts in Applied Mathematics, Springer, New
  York, third~ed., 2008, \url{https://doi.org/10.1007/978-0-387-75934-0},
  \url{https://doi.org/10.1007/978-0-387-75934-0}.

\bibitem{Ch10}
{\sc A.~Chambolle, V.~Caselles, D.~Cremers, M.~Novaga, and T.~Pock}, {\em An
  Introduction to Total Variation for Image Analysis}, De Gruyter, Berlin, New
  York, 2010, pp.~263--340,
  \url{https://doi.org/doi:10.1515/9783110226157.263},
  \url{https://doi.org/10.1515/9783110226157.263}.

\bibitem{Cha11}
{\sc A.~Chambolle and T.~Pock}, {\em A first-order primal-dual algorithm for
  convex problems with applications to imaging}, J. Math. Imaging Vision, 40
  (2011), pp.~120--145, \url{https://doi.org/10.1007/s10851-010-0251-1},
  \url{https://doi.org/10.1007/s10851-010-0251-1}.

\bibitem{Ch03}
{\sc L.~K. Cheng, J.~M. Bodley, and A.~J. Pullan}, {\em Comparison of
  potential-and activation-based formulations for the inverse problem of
  electrocardiology}, IEEE Transactions on Biomedical Engineering, 50 (2003),
  pp.~11--22.

\bibitem{Cl18}
{\sc M.~Cluitmans, D.~H. Brooks, R.~MacLeod, O.~Dössel, M.~S. Guillem, P.~M.
  van Dam, J.~Svehlikova, B.~He, J.~Sapp, L.~Wang, and L.~Bear}, {\em
  Validation and opportunities of electrocardiographic imaging: From technical
  achievements to clinical applications}, Frontiers in Physiology, 9 (2018),
  \url{https://doi.org/10.3389/fphys.2018.01305},
  \url{https://www.frontiersin.org/articles/10.3389/fphys.2018.01305}.

\bibitem{Cl15}
{\sc M.~Cluitmans, R.~Peeters, R.~Westra, and P.~Volders}, {\em Noninvasive
  reconstruction of cardiac electrical activity: Update on current methods,
  applications and challenges}, Netherlands heart journal: monthly journal of
  the Netherlands Society of Cardiology and the Netherlands Heart Foundation,
  23 (2015), \url{https://doi.org/10.1007/s12471-015-0690-9}.

\bibitem{Fr14}
{\sc P.~Colli~Franzone, L.~F. Pavarino, and S.~Scacchi}, {\em Mathematical
  cardiac electrophysiology}, vol.~13 of MS\&A. Modeling, Simulation and
  Applications, Springer, Cham, 2014,
  \url{https://doi.org/10.1007/978-3-319-04801-7},
  \url{https://doi.org/10.1007/978-3-319-04801-7}.

\bibitem{Bc17}
{\sc O.~el~B'Charri, L.~Rachid, K.~Elmansouri, A.~Abenaou, and W.~Jenkal}, {\em
  Ecg signal performance de-noising assessment based on threshold tuning of
  dual-tree wavelet transform}, BioMedical Engineering OnLine, 16 (2017),
  \url{https://doi.org/10.1186/s12938-017-0315-1}.

\bibitem{Es04}
{\sc S.~Esedo\={g}lu and S.~J. Osher}, {\em Decomposition of images by the
  anisotropic {R}udin-{O}sher-{F}atemi model}, Comm. Pure Appl. Math., 57
  (2004), pp.~1609--1626, \url{https://doi.org/10.1002/cpa.20045},
  \url{https://doi.org/10.1002/cpa.20045}.

\bibitem{Ev10}
{\sc L.~C. Evans}, {\em Partial differential equations}, vol.~19 of Graduate
  Studies in Mathematics, American Mathematical Society, Providence, RI, 1998,
  \url{https://doi.org/10.1090/gsm/019}, \url{https://doi.org/10.1090/gsm/019}.

\bibitem{Gander2021UQ}
{\sc L.~Gander, R.~Krause, M.~Multerer, and S.~Pezzuto}, {\em Space-time shape
  uncertainty in the forward and inverse problem of electrocardiography},
  International Journal for Numerical Methods in Biomedical Engineering, 37
  (2021), \url{https://doi.org/10.1002/cnm.3522},
  \url{https://arxiv.org/abs/2010.16104}.

\bibitem{Gh09}
{\sc S.~Ghosh and Y.~Rudy}, {\em Application of l1-norm regularization to
  epicardial potential solution of the inverse electrocardiography problem},
  Annals of biomedical engineering, 37 (2009), pp.~902--12,
  \url{https://doi.org/10.1007/s10439-009-9665-6}.

\bibitem{Go79}
{\sc G.~H. Golub, M.~Heath, and G.~Wahba}, {\em Generalized cross-validation as
  a method for choosing a good ridge parameter}, Technometrics, 21 (1979),
  pp.~215--223.

\bibitem{GeodesicBP2024}
{\sc T.~Grandits, J.~Verh\"ulsdonk, G.~Haase, A.~Effland, and S.~Pezzuto}, {\em
  Digital twinning of cardiac electrophysiology models from the surface {ECG}:
  a geodesic backpropagation approach}, IEEE Transactions on Biomedical
  Engineering, 71 (2024), pp.~1281--1288,
  \url{https://doi.org/10.1109/TBME.2023.3331876},
  \url{https://arxiv.org/abs/2308.08410}.

\bibitem{Gr03}
{\sc F.~Greensite}, {\em The temporal prior in bioelectromagnetic source
  imaging problems}, IEEE Transactions on Biomedical Engineering, 50 (2003),
  pp.~1152--1159, \url{https://doi.org/10.1109/TBME.2003.817632}.

\bibitem{Ha03}
{\sc J.~Hadamard}, {\em Lectures on Cauchy's problem in linear partial
  differential equations}, Courier Corporation, 2003.

\bibitem{Ho09}
{\sc L.~H\"{o}rmander}, {\em The analysis of linear partial differential
  operators. {IV}}, Classics in Mathematics, Springer-Verlag, Berlin, 2009,
  \url{https://doi.org/10.1007/978-3-642-00136-9},
  \url{https://doi.org/10.1007/978-3-642-00136-9}.
\newblock Fourier integral operators, Reprint of the 1994 edition.

\bibitem{Jo94}
{\sc P.~R. Johnston, S.~J. Walker, J.~A. Hyttinen, and D.~Kilpatrick}, {\em
  Inverse electrocardiographic transformations: dependence on the number of
  epicardial regions and body surface data points}, Mathematical Biosciences,
  120 (1994), pp.~165--187,
  \url{https://doi.org/https://doi.org/10.1016/0025-5564(94)90051-5},
  \url{https://www.sciencedirect.com/science/article/pii/0025556494900515}.

\bibitem{Ka18}
{\sc A.~Karoui, L.~Bear, P.~Migerditichan, and N.~Zemzemi}, {\em Evaluation of
  fifteen algorithms for the resolution of the electrocardiography imaging
  inverse problem using ex-vivo and in-silico data}, Frontiers in Physiology, 9
  (2018), \url{https://doi.org/10.3389/fphys.2018.01708}.

\bibitem{Ke10}
{\sc D.~U. Keller, F.~M. Weber, G.~Seemann, and O.~D{\"o}ssel}, {\em Ranking
  the influence of tissue conductivities on forward-calculated ecgs}, IEEE
  Transactions on Biomedical Engineering, 57 (2010), pp.~1568--1576.

\bibitem{Li03}
{\sc G.~Lines, M.~Buist, P.~Grottum, A.~Pullan, J.~Sundnes, and A.~Tveito},
  {\em Mathematical models and numerical methods for the forward problem in
  cardiac electrophysiology}, Computing and Visualization in Science, 5 (2003),
  pp.~215--239.

\bibitem{Ma10}
{\sc P.~W. Macfarlane, A.~Van~Oosterom, O.~Pahlm, P.~Kligfield, M.~Janse, and
  J.~Camm}, {\em Comprehensive electrocardiology}, Springer Science \& Business
  Media, 2010.

\bibitem{Me04}
{\sc B.~Messnarz, B.~Tilg, R.~Modre, G.~Fischer, and F.~Hanser}, {\em A new
  spatiotemporal regularization approach for reconstruction of cardiac
  transmembrane potential patterns}, IEEE Transactions on Biomedical
  Engineering, 51 (2004), pp.~273--281,
  \url{https://doi.org/10.1109/TBME.2003.820394}.

\bibitem{Mo22}
{\sc R.~Moss, E.~Wülfers, R.~Lewetag, T.~Hornyik, S.~Perez-Feliz,
  T.~Strohbach, M.~Menza, A.~Krafft, K.~Odening, and G.~Seemann}, {\em A
  computational model of rabbit geometry and ecg: Optimizing ventricular
  activation sequence and apd distribution}, PLOS ONE, 17 (2022), p.~e0270559,
  \url{https://doi.org/10.1371/journal.pone.0270559}.

\bibitem{Ne12}
{\sc J.~r. Ne\v~cas}, {\em Direct methods in the theory of elliptic equations},
  Springer Monographs in Mathematics, Springer, Heidelberg, 2012,
  \url{https://doi.org/10.1007/978-3-642-10455-8},
  \url{https://doi.org/10.1007/978-3-642-10455-8}.
\newblock Translated from the 1967 French original by Gerard Tronel and Alois
  Kufner, Editorial coordination and preface by \v S\'arka Ne\v casov\'a{} and
  a contribution by Christian G. Simader.

\bibitem{No22}
{\sc D.~Nolte and C.~Bertoglio}, {\em Inverse problems in blood flow modeling:
  A review}, International journal for numerical methods in biomedical
  engineering, 38 (2022), p.~e3613.

\bibitem{CUPY}
{\sc R.~Okuta, Y.~Unno, D.~Nishino, S.~Hido, and C.~Loomis}, {\em Cupy: A
  numpy-compatible library for nvidia gpu calculations}, in Proceedings of
  Workshop on Machine Learning Systems (LearningSys) in The Thirty-first Annual
  Conference on Neural Information Processing Systems (NIPS), 2017,
  \url{http://learningsys.org/nips17/assets/papers/paper_16.pdf}.

\bibitem{On09}
{\sc M.~Onal and Y.~Serinagaoglu}, {\em Spatio-temporal solutions in inverse
  electrocardiography}, in 4th European Conference of the International
  Federation for Medical and Biological Engineering, J.~Vander~Sloten,
  P.~Verdonck, M.~Nyssen, and J.~Haueisen, eds., Berlin, Heidelberg, 2009,
  Springer Berlin Heidelberg, pp.~180--183.

\bibitem{Os92}
{\sc H.~Oster and Y.~Rudy}, {\em The use of temporal information in the
  regularization of the inverse problem of electrocardiography}, IEEE
  Transactions on Biomedical Engineering, 39 (1992), pp.~65--75,
  \url{https://doi.org/10.1109/10.108129}.

\bibitem{Pe17}
{\sc S.~Pezzuto, P.~Kal'avský, M.~Potse, F.~W. Prinzen, A.~Auricchio, and
  R.~Krause}, {\em Evaluation of a {Rapid} {Anisotropic} {Model} for {ECG}
  {Simulation}}, Frontiers in Physiology, 8 (2017), p.~265,
  \url{https://doi.org/10.3389/fphys.2017.00265}.

\bibitem{PezzutoBayes2022}
{\sc S.~Pezzuto, P.~Perdikaris, and F.~Sahli~Costabal}, {\em Learning cardiac
  activation maps from 12-lead {ECG} with multi-fidelity {B}ayesian
  optimization on manifolds}, IFAC-PapersOnLine, 55 (2022), pp.~175--180,
  \url{https://doi.org/10.1016/j.ifacol.2022.09.091},
  \url{https://arxiv.org/abs/2203.06222}.

\bibitem{InvScar}
{\sc G.~Pozzi, D.~Ambrosi, and S.~Pezzuto}, {\em Reconstruction of the local
  contractility of the cardiac muscle from deficient apparent kinematics},
  Journal of the Mechanics and Physics of Solids,  (2024),
  \url{https://doi.org/10.1016/j.jmps.2024.105793},
  \url{https://arxiv.org/abs/2404.11137}.

\bibitem{Qu17}
{\sc A.~Quarteroni, A.~Manzoni, and C.~Vergara}, {\em The cardiovascular
  system: mathematical modelling, numerical algorithms and clinical
  applications}, Acta Numerica, 26 (2017), p.~365–590,
  \url{https://doi.org/10.1017/S0962492917000046}.

\bibitem{Ra04}
{\sc C.~Ramanathan, R.~Ghanem, P.~Jia, K.~Ryu, and Y.~Rudy}, {\em Noninvasive
  electrocardiographic imaging for cardiac electrophysiology and arrhythmia},
  Nature medicine, 10 (2004), pp.~422--428,
  \url{https://doi.org/10.1038/nm1011}.

\bibitem{Ru92}
{\sc L.~I. Rudin, S.~Osher, and E.~Fatemi}, {\em Nonlinear total variation
  based noise removal algorithms}, Physica D: Nonlinear Phenomena, 60 (1992),
  pp.~259--268,
  \url{https://doi.org/https://doi.org/10.1016/0167-2789(92)90242-F},
  \url{https://www.sciencedirect.com/science/article/pii/016727899290242F}.

\bibitem{Se05}
{\sc M.~Seger, G.~Fischer, R.~Modre-Osprian, B.~Messnarz, F.~Hanser, and
  B.~Tilg}, {\em Lead field computation for the electrocardiographic inverse
  problem - finite elements versus boundary elements}, Computer methods and
  programs in biomedicine, 77 (2005), pp.~241--52,
  \url{https://doi.org/10.1016/j.cmpb.2004.10.005}.

\bibitem{Sh10}
{\sc G.~Shou, L.~Xia, and M.~Jiang}, {\em Total variation regularization in
  electrocardiographic mapping}, in Life System Modeling and Intelligent
  Computing, K.~Li, L.~Jia, X.~Sun, M.~Fei, and G.~W. Irwin, eds., Berlin,
  Heidelberg, 2010, Springer Berlin Heidelberg, pp.~51--59.

\bibitem{ST08}
{\sc M.~Stenroos and J.~Haueisen}, {\em Boundary element computations in the
  forward and inverse problems of electrocardiography: comparison of
  collocation and galerkin weightings}, IEEE Transactions on Biomedical
  Engineering, 55 (2008), pp.~2124--2133.

\bibitem{Su06}
{\sc J.~Sundnes, G.~T. Lines, X.~Cai, B.~r.~F. Nielsen, K.-A. Mardal, and
  A.~Tveito}, {\em Computing the electrical activity in the heart}, vol.~1 of
  Monographs in Computational Science and Engineering, Springer-Verlag, Berlin,
  2006.

\bibitem{Te22}
{\sc R.~Tenderini, S.~Pagani, A.~Quarteroni, and S.~Deparis}, {\em P{DE}-aware
  deep learning for inverse problems in cardiac electrophysiology}, SIAM J.
  Sci. Comput., 44 (2022), pp.~B605--B639,
  \url{https://doi.org/10.1137/21M1438529},
  \url{https://doi.org/10.1137/21M1438529}.

\bibitem{Ti77}
{\sc A.~N. Tikhonov and V.~Y. Arsenin}, {\em Solutions of ill-posed problems},
  Scripta Series in Mathematics, V. H. Winston \& Sons, Washington, DC; John
  Wiley \& Sons, New York-Toronto-London, 1977.
\newblock Translated from the Russian, Preface by translation editor Fritz
  John.

\bibitem{Wa23}
{\sc J.~{van der Waal}, V.~Meijborg, R.~Coronel, R.~Dubois, and T.~Oostendorp},
  {\em Basis and applicability of noninvasive inverse electrocardiography: a
  comparison between cardiac source models}, Frontiers in physiology, 14
  (2023), \url{https://doi.org/https://doi.org/10.3389/fphys.2023.1295103}.

\bibitem{SCIPY}
{\sc P.~Virtanen, R.~Gommers, T.~E. Oliphant, M.~Haberland, T.~Reddy,
  D.~Cournapeau, E.~Burovski, P.~Peterson, W.~Weckesser, J.~Bright, S.~J. {van
  der Walt}, M.~Brett, J.~Wilson, K.~J. Millman, N.~Mayorov, A.~R.~J. Nelson,
  E.~Jones, R.~Kern, E.~Larson, C.~J. Carey, {\.I}.~Polat, Y.~Feng, E.~W.
  Moore, J.~{VanderPlas}, D.~Laxalde, J.~Perktold, R.~Cimrman, I.~Henriksen,
  E.~A. Quintero, C.~R. Harris, A.~M. Archibald, A.~H. Ribeiro, F.~Pedregosa,
  P.~{van Mulbregt}, and {SciPy 1.0 Contributors}}, {\em {{SciPy} 1.0:
  Fundamental Algorithms for Scientific Computing in Python}}, Nature Methods,
  17 (2020), pp.~261--272, \url{https://doi.org/10.1038/s41592-019-0686-2}.

\bibitem{WaKi11}
{\sc D.~Wang, R.~Kirby, and C.~Johnson}, {\em Finite-element-based
  discretization and regularization strategies for 3-d inverse
  electrocardiography}, Biomedical Engineering, IEEE Transactions on, 58
  (2011), pp.~1827 -- 1838, \url{https://doi.org/10.1109/TBME.2011.2122305}.

\bibitem{Wa10}
{\sc D.~Wang, R.~M. Kirby, and C.~R. Johnson}, {\em Resolution strategies for
  the finite-element-based solution of the ecg inverse problem}, IEEE
  Transactions on Biomedical Engineering, 57 (2010), pp.~220--237,
  \url{https://doi.org/10.1109/TBME.2009.2024928}.

\bibitem{Wa11}
{\sc L.~Wang, J.~Qin, T.~T. Wong, and P.~A. Heng}, {\em Application of l1-norm
  regularization to epicardial potential reconstruction based on gradient
  projection}, Physics in Medicine \& Biology, 56 (2011), p.~6291,
  \url{https://doi.org/10.1088/0031-9155/56/19/009},
  \url{https://dx.doi.org/10.1088/0031-9155/56/19/009}.

\bibitem{Wa06}
{\sc Y.~Wang and Y.~Rudy}, {\em Application of the method of fundamental
  solutions to potential-based inverse electrocardiography}, Annals of
  biomedical engineering, 34 (2006), pp.~1272--1288.

\bibitem{Ji14}
{\sc J.~Xu, A.~R. Dehaghani, F.~Gao, and L.~Wang}, {\em Noninvasive transmural
  electrophysiological imaging based on minimization of total-variation
  functional}, IEEE Transactions on Medical Imaging, 33 (2014), pp.~1860--1874,
  \url{https://doi.org/10.1109/TMI.2014.2324900}.

\end{thebibliography}
\end{document}